\newtheorem{thm}{Theorem}
\newtheorem{lem}[thm]{Lemma}
\newtheorem{prop}[thm]{Proposition}
\newtheorem{cor}[thm]{Corollary}
\numberwithin{equation}{section}
\numberwithin{thm}{section}
\theoremstyle{definition}
\newtheorem{ex}[thm]{Example}
\newtheorem{conj}[thm]{Conjecture}
\newcommand{\rat}{\mathbb Q}
\newcommand{\real}{\mathbb R}
\newcommand{\alg}{\overline\rat}
\newcommand{\algt}{\alg^{\times}}
\newcommand{\nat}{\mathbb N}
\newcommand{\tors}{\mathrm{tors}}
\newcommand{\zz}{{\bf z}}
\newcommand{\xx}{{\bf x}}
\newcommand{\yy}{{\bf y}}
\newcommand{\NN}{\mathcal U}
\newcommand{\DD}{\mathcal L}
\newcommand{\GG}{\mathcal G}
\newcommand{\convex}{\mathrm{Conv}}
\newcommand{\vertex}{\mathrm{Vert}}
\newcommand{\comment}[1]{}
\title[Continued Fractions and the Metric Mahler Measure]{Continued Fraction Expansions in Connection with the Metric Mahler Measure}
\author{Charles L. Samuels}
\address{Christopher Newport University, Department of Mathematics, 1 Avenue of the Arts, Newport News, VA 23606}
\email{charles.samuels@cnu.edu}
\subjclass[2010]{11A51, 11G50, 11R09 (Primary); 11J70 (Secondary)}
\keywords{Mahler Measure, Metric Mahler Measure, Height Functions, Continued Fractions}
\begin{document}

\begin{abstract}
	The metric Mahler measure was first studied by Dubickas and Smyth in 2001 as a means of phrasing Lehmer's conjecture in topological language.  
	More recent work of the author examined a parametrized family of generalized metric Mahler measures that gives rise to a series of new, and apparently difficult, problems.  
	We establish a connection between these metric Mahler measures and the theory of continued fractions in a certain class of special cases.  
	Our results enable us to calculate metric Mahler measures in several new examples.
\end{abstract}

\maketitle

\section{Introduction}

Suppose that $K$ is a number field and $v$ is a place of $K$ dividing the place $p$ of $\rat$.  Let $K_v$ and $\rat_p$ be their respective completions
so that $K_v$ is a finite extension of $\rat_p$.  We note the well-known fact that
\begin{equation*}
	\sum_{v\mid p} [K_v:\rat_p] = [K:\rat],
\end{equation*}
where the sum is taken over all places $v$ of $K$ dividing $p$.  Given $x\in K_v$, we define $\|x\|_v$ to be the unique extension of the $p$-adic absolute value on $\rat_p$ and set
\begin{equation} \label{NormalAbs}
	|x|_v = \|x\|_v ^{[K_v:\rat_p]/[K:\rat]}.
\end{equation}
If $\alpha\in K$, then $\alpha\in K_v$ for every place $v$, so we may define the {\it (logarithmic) Weil height} by
\begin{equation*}
	h(\alpha) = \sum_v \log^+ |\alpha|_v.
\end{equation*}
Due to our normalization of absolute values \eqref{NormalAbs}, this definition is independent of $K$, meaning that $h$ is well-defined
as a function on the multiplicative group $\algt$ of non-zero algebraic numbers.

It is well-known that $h(\alpha) = 0$ if and only if $\alpha$ is a root of unity, and it can easily be verified that $h(\alpha^n) = |n|\cdot h(\alpha)$ for all
integers $n$.  In particular, we see that $h(\alpha) = h(\alpha^{-1})$. If $\alpha\in \rat^\times$ there exists relatively prime integers $r$ and $s$ such that $\alpha = r/s$.
Under these assumptions, we have that $h(\alpha) = \log \max\{|r|,|s|\}$.

The Weil height is closely connected to a famous 1933 problem of D.H. Lehmer \cite{Lehmer}.  The {\it (logarithmic) Mahler measure} of a non-zero algebraic number $\alpha$ is
defined by 
\begin{equation} \label{MahlerMeasure}
	m(\alpha) = [\rat(\alpha):\rat] \cdot h(\alpha).  
\end{equation}	
In attempting to construct large prime numbers, Lehmer came across the problem of determining whether there exists a sequence of
algebraic numbers $\{\alpha_n\}$, not roots of unity, such that $m(\alpha_n)$ tends to $0$ as $n\to\infty$.  This problem remains unresolved, although substantial evidence
suggests that no such sequence exists (see \cite{BDM, MossWeb, Schinzel, Smyth}, for instance).  This assertion is typically called Lehmer's conjecture.

\begin{conj}[Lehmer's Conjecture]
	There exists $c>0$ such that $m(\alpha) \geq c$ whenever $\alpha\in \algt$ is not a root of unity. 
\end{conj}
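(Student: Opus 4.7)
The plan is to attack the conjecture through the reformulation due to Dubickas and Smyth that motivates the entire paper. Define the \emph{metric Mahler measure} by
\begin{equation*}
	m_\infty(\alpha) = \inf\left\{\sum_{i=1}^N m(\alpha_i) : \alpha = \prod_{i=1}^N \alpha_i,\ \alpha_i\in\algt,\ N\in\nat\right\}.
\end{equation*}
Dubickas and Smyth proved that $m_\infty$ is a well-defined function on $\algt$ that vanishes precisely on the roots of unity, and that Lehmer's conjecture is equivalent to the existence of $c>0$ with $m_\infty(\alpha)\ge c$ for every non-torsion $\alpha$. My strategy is to establish such a uniform lower bound directly for $m_\infty$, bypassing the need to work with $m$ degree-by-degree.

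First I would reduce to algebraic integers: a standard argument shows that if the bound holds on the subgroup of units in $\algt$ modulo torsion, it transfers back to $\algt$, because prime factorization in $\overline{\intg}$ controls the archimedean contribution against the non-archimedean one. Second, I would split along the Smyth dichotomy, handling non-reciprocal $\alpha$ by Smyth's theorem (which gives $m(\alpha)\ge\log\theta_0$ where $\theta_0$ is the smallest Pisot number) and thus $m_\infty(\alpha)\ge\log\theta_0$ immediately. Third, for reciprocal algebraic integers, I would exploit the continued-fraction machinery to be developed later in this paper: given a factorization $\alpha=\prod\alpha_i$ that nearly achieves the infimum defining $m_\infty(\alpha)$, I would argue that at least one factor $\alpha_i$ must itself have a continued fraction expansion governing its height, giving a concrete lower bound on $\sum m(\alpha_i)$ independent of the degree of $\alpha$.

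The main obstacle, and I must be candid that it is a severe one, is the third step: the continued-fraction results of this paper apply to special families (essentially rationals and certain algebraic numbers whose expansions we can analyze), not to arbitrary reciprocal integers of large degree. In fact, any factorization $\alpha=\prod\alpha_i$ is free to introduce auxiliary factors $\alpha_i$ of enormous degree, and the infimum in the definition of $m_\infty$ may be approached only by increasingly exotic factorizations whose individual factors lie outside the reach of the continued-fraction analysis. Overcoming this would require a structure theorem showing that near-optimal factorizations can always be chosen from a class the continued-fraction machinery controls.

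Given that Lehmer's conjecture has resisted proof since 1933 and that the best unconditional lower bound remains Dobrowolski's $m(\alpha)\gg(\log\log d/\log d)^3$ (where $d=[\rat(\alpha):\rat]$), I would expect this plan, as stated, to fall short at the third step: the reduction to controlled factorizations is almost certainly where a genuinely new idea would have to enter. My realistic goal would therefore be to use the present paper's continued-fraction techniques to prove Lehmer's conjecture \emph{conditionally}, namely for those $\alpha$ that admit a near-optimal factorization into factors whose metric Mahler measures are accessible by continued fractions, while leaving the unconditional statement open.
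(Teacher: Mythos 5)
The statement you were asked about is Lehmer's Conjecture, which the paper states precisely as a \emph{conjecture}: it offers no proof, and none exists in the literature. So there is no ``paper's own proof'' to compare against, and your proposal is, by your own candid admission, not a proof either --- it is a research plan whose decisive step is missing. The gap is exactly where you locate it, but it is worth being precise about why it cannot be filled by anything in this paper. The Dubickas--Smyth reformulation (your $m_\infty$ is the paper's $m_1$) is a faithful equivalence, but it only repackages the difficulty: a uniform lower bound on $m_1$ over non-torsion classes is neither easier nor harder than the original statement, since $m_1(\alpha)\le m(\alpha)$ and the infimum over factorizations can only decrease the quantity you are trying to bound from below. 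The continued-fraction machinery developed here applies exclusively to $\alpha=p^a/q^b$ with $p,q$ distinct primes --- a regime in which Lehmer's conjecture is trivially true, because every rational $\alpha\ne\pm1$ that is not an integer reciprocal of $\pm 1$ satisfies $m(\alpha)=\log\max\{|r|,|s|\}\ge\log 2$. The entire content of Lehmer's problem lives in the reciprocal algebraic integers of large degree, where neither Smyth's theorem nor anything in this paper says a word, and where near-optimal factorizations in the definition of $m_1$ may involve auxiliary factors of unbounded degree that no known structure theorem controls.

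Your fallback goal of a conditional result is reasonable in spirit, but as stated it is close to vacuous: ``Lehmer's conjecture holds for those $\alpha$ admitting a near-optimal factorization into factors accessible by continued fractions'' restricts, in the context of this paper, to numbers of the form $p^a/q^b$, for which the conclusion already follows from the elementary bound $m(\alpha)\ge\log 2$ without any of the metric or continued-fraction apparatus. The honest summary is that the statement is open, the paper does not claim otherwise, and your proposal correctly identifies --- but does not overcome --- the obstruction.
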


Dobrowolski \cite{Dobrowolski} provided the best known lower bound on $m(\alpha)$ in terms of $\deg\alpha$, while Voutier \cite{Voutier}
later gave a  version of this result with an effective constant.  Nevertheless, only little progress has been made on Lehmer's conjecture for an arbitrary algebraic number $\alpha$.

Dubickas and Smyth \cite{DubSmyth, DubSmyth2} were the first to study a modified version of the Mahler measure that has the triangle inequality.
A point $(\alpha_1,\alpha_2,\ldots,\alpha_N)\in (\algt)^N$ is called a {\it product representation of $\alpha$} if $\alpha = \prod_{n=1}^N \alpha_n$,
and we write $\mathcal P(\alpha)$ to denote the set of all product representations of $\alpha$.  Dubickas and Smyth defined the {\it metric Mahler measure} by
\begin{equation*} \label{m1}
	m_1(\alpha) = \inf\left\{ \sum_{n=1}^N m(\alpha_n): (\alpha_1,\alpha_2,\ldots,\alpha_N)\in \mathcal P(\alpha)\right\}.
\end{equation*}
It is verified in \cite{DubSmyth2} that $(\alpha,\beta) \mapsto m_1(\alpha\beta^{-1})$ is a well-defined metric on $\algt/\algt_\tors$ which
induces the discrete topology if and only if Lehmer's conjecture is true.
The author \cite{SamuelsCollection, SamuelsParametrized, SamuelsMetric} extended the metric Mahler measure to the {\it $t$-metric Mahler measure}
\begin{equation} \label{mt}
	m_t(\alpha) = \inf\left\{ \left( \sum_{n=1}^N m(\alpha_n)^t\right)^{1/t}: (\alpha_1,\alpha_2,\ldots,\alpha_N)\in \mathcal P(\alpha) \right\}
\end{equation}
which is well-defined for all $t > 0$.
In this context, we examined the function $t\mapsto m_t(\alpha)$ for a fixed algebraic number $\alpha$.  For instance, we showed that this function is everywhere continuous 
and infinitely differentiable at all but finitely many points.

As $\mathcal P(\alpha)$ is infinite, if one is studying $m_t(\alpha)$, it is often helpful to be able to replace $\mathcal P(\alpha)$ with a finite set in \eqref{mt}.
A finite subset $\mathcal X(\alpha) \subseteq \mathcal P(\alpha)$ is called an {\it infimum set for $\alpha$} if 
\begin{equation*}
	m_t(\alpha) = \min\left\{ \left( \sum_{n=1}^N m(\alpha_n)^t\right)^{1/t}: (\alpha_1,\alpha_2,\cdots,\alpha_N)\in \mathcal X(\alpha)\right\}
\end{equation*}
for all $t > 0$.  As part of an article examining general metric heights, the author \cite{SamuelsMetric} established the following.

\begin{thm}[S., 2014] \label{FiniteAttain}
	Every algebraic number has an infimum set.
\end{thm}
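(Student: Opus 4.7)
The plan is to reduce the search over $\ppp(\alpha)$ to integer programming on $\nat^r$ for a finite $r$, and then to invoke Dickson's lemma. Since the trivial representation $(\alpha) \in \ppp(\alpha)$ has $t$-value $m(\alpha)$, we have $m_t(\alpha) \leq m(\alpha)$ for every $t > 0$, so every potentially optimal representation $(\alpha_1,\ldots,\alpha_N)$ must have each factor satisfying $m(\alpha_n) \leq m(\alpha)$. Dobrowolski's theorem bounds the degree of any non-torsion algebraic number with Mahler measure at most $m(\alpha)$, and Northcott's theorem then shows that the set
$S := \{\beta \in \algt \setminus \algt_{\tors} : m(\beta) \leq m(\alpha)\}$
is finite; write $S = \{\beta_1,\ldots,\beta_r\}$.

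Next, since any torsion factor contributes $0$ to the sum, one may collapse all torsion factors in a representation into a single factor. A representation of $\alpha$ whose non-torsion part is drawn from $S$ is then determined, up to a single root-of-unity factor, by a multiplicity vector $\vec e = (e_1,\ldots,e_r) \in \nat^r$ subject to $\sum_{i=1}^r e_i \bar\beta_i = \bar\alpha$ in the torsion-free quotient $\algt/\algt_{\tors}$. Let $F \subseteq \nat^r$ denote this feasible set; the $t$-value of the associated representation is $\bigl(\sum_i e_i\, m(\beta_i)^t\bigr)^{1/t}$.

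The key observation is that for every $t > 0$, the objective is strictly increasing in each coordinate $e_i$, since $m(\beta_i) > 0$. Therefore any minimizer over $F$ must be componentwise-minimal. Dickson's lemma then forces the set $F_{\min}$ of componentwise-minimal elements of $F$ to be finite, and since every $\vec e \in F$ dominates some $\vec e' \in F_{\min}$, we have $\inf_F = \min_{F_{\min}}$ for all $t > 0$. The desired infimum set $\X(\alpha)$ is obtained by attaching, to each $\vec e \in F_{\min}$, the unique torsion factor that makes the product equal $\alpha$. The main obstacle in this plan is the initial reduction to the finite set $S$: this step genuinely requires Dobrowolski's theorem, because the constraint $m(\alpha_n) \leq m(\alpha)$ does not by itself bound $\deg(\alpha_n)$, and without a degree cap $S$ could in principle be infinite. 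Once $S$ is known to be finite, the remainder of the argument is a clean application of well-quasi-ordering.
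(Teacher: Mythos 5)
Your reduction collapses at its first substantive step: the set $S=\{\beta\in\algt\setminus\algt_{\tors}: m(\beta)\leq m(\alpha)\}$ is \emph{not} finite, and neither Dobrowolski nor Northcott makes it so. Dobrowolski's theorem gives a lower bound of the shape $m(\beta)\geq c\left(\frac{\log\log d}{\log d}\right)^3$ with $d=\deg\beta$; this bound \emph{decreases} to $0$ as $d\to\infty$, so an upper bound on $m(\beta)$ imposes no upper bound on $d$ (even the full Lehmer conjecture, a uniform constant, would not). Without a degree cap, Northcott's theorem does not apply. Concretely, $m(2^{1/n})=n\cdot h(2^{1/n})=\log 2$ for every $n\in\nat$, so whenever $m(\alpha)\geq \log 2$ your set $S$ already contains infinitely many elements, and these have pairwise distinct images in $\algt/\algt_{\tors}$, so they cannot be collapsed. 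This is precisely the genuine difficulty of the theorem: the candidate factors cannot be confined a priori to a finite list merely by bounding their Mahler measures.

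For what it is worth, the paper does not reprove this result; it cites \cite{SamuelsMetric}, where the argument must first show that the infimum may be restricted to factors drawn from a suitable finitely generated (finite-rank) subgroup of $\algt/\algt_{\tors}$ determined by $\alpha$ --- the step your proposal is missing. Once such a finiteness-of-support statement is in hand, the second half of your argument (minimizers are componentwise minimal for every $t>0$ because each $m(\beta_i)>0$, Dickson's lemma gives finitely many minimal feasible multiplicity vectors, and torsion factors contribute nothing) is sound and is indeed the standard way to finish; compare the explicit finite set $\mathcal F(\alpha)$ used in Theorem \ref{RationalAttain} for rational $\alpha$, where the support is visibly finite because numerators and denominators must divide those of $\alpha$. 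So the well-quasi-ordering endgame is fine; the reduction to a finite alphabet of factors is the part that genuinely requires a new idea, not Dobrowolski.
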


In view of this result, we may conclude that $t\mapsto m_t(\alpha)$ is a piecewise function with finitely many pieces each having the form
\begin{equation*}
	t\mapsto  \left( \sum_{n=1}^N m(\alpha_n)^t\right)^{1/t},
\end{equation*}
where $(\alpha_1,\alpha_2,\ldots,\alpha_N)\in \mathcal P(\alpha)$.  In particular, there exists a product representation of $\alpha$ which attains the infimum in $m_t(\alpha)$
for all sufficiently large $t$.  

While Theorem \ref{FiniteAttain} has these useful consequences, it also has a key weakness -- it provides no method for constructing a particular
infimum set for $\alpha$.  The earlier work of Jankauskas and the author \cite{JankSamuels}, although it applies only to the special case $\alpha\in \rat$, 
does not suffer from this same disadvantage.  

Suppose without loss of generality that $\alpha$ is a positive rational number with $\alpha\ne 1$.  A point
\begin{equation*} \label{FactorDef}
	\left( \frac{r_1}{s_1},\frac{r_2}{s_2},\cdots,\frac{r_N}{s_N}\right) \in \mathcal P(\alpha)
\end{equation*}
is called a {\it factorization} of $\alpha$ if 
\begin{enumerate}[(i)]
	\item $r_n,s_n \in \nat$ for all $1\leq n\leq N$
	\item $r_n/s_n \ne 1$ for all $1\leq n\leq N$
	\item $\gcd(r_m,s_n) = 1$ for all $1\leq m,n\leq N$
\end{enumerate}
If we further assume that $r$ and $s$ are relatively prime positive integers with $\alpha = r/s$, then
\begin{equation*}
	r = \prod_{n=1}^N r_n\quad\mbox{and}\quad s = \prod_{n=1}^N s_n.
\end{equation*}
This implies that the set $\mathcal F(\alpha)$ of factorizations of $\alpha$ is finite and we obtain the following result.

\begin{thm}[Jankauskas \& S., 2012] \label{RationalAttain}
	If $\alpha$ is a positive rational number with $\alpha\ne 1$ then $\mathcal F(\alpha)$ is an infimum set for $\alpha$.
\end{thm}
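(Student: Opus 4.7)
The plan is to show that given any product representation $(\beta_1,\ldots,\beta_M)\in\mathcal P(\alpha)$, one can construct a factorization $(\alpha_1,\ldots,\alpha_N)\in\mathcal F(\alpha)$ satisfying
$$\sum_{n=1}^N m(\alpha_n)^t \le \sum_{m=1}^M m(\beta_m)^t.$$
Taking the infimum over $\mathcal P(\alpha)$ on the right, and using that $\mathcal F(\alpha)$ is finite and contained in $\mathcal P(\alpha)$, will then force $m_t(\alpha)$ to equal the minimum over $\mathcal F(\alpha)$. I would split the construction into two reductions.

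The first reduction replaces an arbitrary product representation with one consisting entirely of positive rationals. For an irrational factor $\beta$ with minimal polynomial $a_d x^d+\cdots+a_0\in\intg[x]$, the basic bound $m(\beta)=\log M(f)\ge\log\max(|a_0|,|a_d|)$ identifies a natural ``rational skeleton'' $|a_0|/|a_d|$ inside $m(\beta)$. Because the target $\alpha$ is rational, the irrational factors' contributions must compensate one another when one passes to norms over the Galois closure of $\rat(\beta_1,\ldots,\beta_M)$, and this should allow the irrational $\beta_m$ to be repackaged into rational surrogates whose combined Mahler measures do not exceed the original sum. This step is the main obstacle: $m$ depends globally on the full minimal polynomial rather than on its endpoint coefficients alone, so the rearrangement has to be coordinated across the entire tuple and must also behave well under the $\ell^t$-norm, not merely the $\ell^1$-norm. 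A clean execution might proceed by induction on the number of irrational factors, or may be inherited from a rational-reduction principle in the Dubickas--Smyth framework.

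The second reduction, assuming now that each $\beta_m=p_m/q_m$ is a positive rational in lowest terms with $\prod\beta_m=r/s$, enforces conditions (i)--(iii). Condition (i) is immediate, and (ii) is secured by discarding any $\beta_m=1$, which only lowers the sum. For (iii), one applies the following elementary move: whenever a prime $\ell$ divides both $p_i$ and $q_j$ for some $i\ne j$, set $c=\min(v_\ell(p_i),v_\ell(q_j))\ge 1$ and replace the pair $(\beta_i,\beta_j)$ by $(\ell^{-c}\beta_i,\ell^{c}\beta_j)$. The product is unchanged, and because $m(u/v)=\log\max(u,v)$ for coprime positive integers $u,v$, each of the two altered Mahler measures weakly decreases; hence $\sum m(\beta_m)^t$ does as well. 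The monovariant $\sum_m(\log p_m+\log q_m)$ strictly decreases with every move, so the procedure terminates at a configuration in which no prime divides a numerator and a denominator of two distinct factors, i.e., at an element of $\mathcal F(\alpha)$.

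Chaining the two reductions produces the desired factorization and completes the argument. The prime-rearrangement step, while requiring some bookkeeping, is conceptually routine thanks to the simple closed form of $m$ on $\rat$; the rational-reduction step is where the real work lies and is where I would expect to invest the most care.
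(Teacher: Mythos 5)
This theorem is not proved in the paper at all: it is quoted from Jankauskas--Samuels (2012), so there is no in-paper argument to compare against, and your proposal has to stand on its own. Judged that way, it splits cleanly into a part that works and a part that is a genuine gap. Your second reduction is correct and complete: for positive rationals in lowest terms, each move $(\beta_i,\beta_j)\mapsto(\ell^{-c}\beta_i,\ell^{c}\beta_j)$ preserves the product, keeps both fractions in lowest terms, and weakly decreases each individual Mahler measure because $m(u/v)=\log\max(u,v)$; hence $\sum m(\beta_m)^t$ decreases for every $t>0$ simultaneously, and your monovariant forces termination at an element of $\mathcal F(\alpha)$. (Two small points you should still say explicitly: factors may be negative rationals, which you replace by their absolute values using $m(-x)=m(x)$, and a move may create a factor equal to $1$, which you then discard.)

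The first reduction, however, is the entire content of the cited theorem, and you have not supplied it. The ``rational skeleton'' $|a_0|/|a_d|$ does not work as stated: for $(\beta_1,\beta_2)=(\sqrt2,\sqrt2)$ with product $2$, each skeleton is $2$ and their product is $4\neq\alpha$. The norm identity only gives $\alpha^{[K:\rat]}=\prod_i\bigl(\pm a_0^{(i)}/a_d^{(i)}\bigr)^{[K:\rat(\beta_i)]}$ with varying exponents, so the surrogates cannot be read off coefficient by coefficient; one must work place by place, using $m(\beta)=\sum_{v}[\rat(\beta):\rat]\log^+|\beta|_v$ to show that for each prime $\ell$ the total $\ell$-adic ``mass'' available among the $m(\beta_i)$ covers $v_\ell(r)+v_\ell(s)$, and then assemble rationals $\gamma_i$ with $\prod\gamma_i=\alpha$ and $m(\gamma_i)\le m(\beta_i)$ \emph{termwise} (termwise domination is what makes the bound hold for every $t$ at once, which you correctly flag as necessary). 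You acknowledge this is ``where the real work lies,'' but deferring it to ``a rational-reduction principle in the Dubickas--Smyth framework'' is circular here: that principle, in the form needed, is precisely Theorem \ref{RationalAttain}. As it stands the proposal proves only that $\mathcal F(\alpha)$ computes the infimum over product representations by rationals, not over all of $\mathcal P(\alpha)$.
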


In the special case $\alpha \in \rat$, this result is an improvement over Theorem \ref{FiniteAttain}, which only establishes the existence of an infimum set for $\alpha$,
whereas Theorem \ref{RationalAttain} identifies a specific infimum set. 
In this article, we further specialize to the case where $\alpha$ has exactly two distinct prime factors.  That is, we assume that $\alpha = p^a/q^b$, 
where $p$ and $q$ are primes and $a,b\in \nat$.  Our main results, Theorems \ref{FirstAxioms} and \ref{Main}, produce a particular infimum set
arising from the continued fraction expansion for $\log q/\log p$.  In spite of the specialized nature of our results, we believe they have the following two advantages:
\begin{enumerate}
	\item Our infimum set $\mathcal B(\alpha)$ is smaller than $\mathcal F(\alpha)$, and hence, we improve the best known result (Theorem \ref{RationalAttain})
		in the special case where $\alpha = p^a/q^b$.
	\item\label{ApproxConnection} We establish a connection between the metric Mahler measures and the theory of continued fractions. 
\end{enumerate}
We believe that \eqref{ApproxConnection} is of particular interest because it is the first known connection between the metric Mahler measures and a more classical 
area of number theory.

The remainder of this article is structured as follows.  In Section \ref{BestApproximations}, we provide two key definitions --  upper and lower best rational approximations
to an irrational number $\xi$.  
Further, we apply a well-known technique (see \cite[pp. 55-63]{Perron}) to note these rational numbers are obtained via continued fraction expansions.  
In Section \ref{Mains}, we state our two main results showing that the infimum in $m_t(\alpha)$ is attained using the aforementioned upper and lower best approximations to
$\log q/\log p$.  In Section \ref{Apps}, we consider several new applications of our main results, we establish an estimate on the size of $\mathcal B(\alpha)$ in certain special cases,
and we pose some open questions arising from our work.  Finally, we provide the proofs of all of our new results in Section \ref{Proofs}.

\section{Upper and lower best approximations} \label{BestApproximations}

We shall write $\nat_0 = \{0,1,2,3,\ldots\}$ and let $\mathcal N = \{(a,b)\in \nat_0\times\nat_0: (a,b)\ne (0,0)\}$.
If $(a,b),(c,d)\in \mathcal N$ we define addition of these elements in the natural way
\begin{equation} \label{AdditionND}
	(a,b) + (c,d) = (a+c,b+d).
\end{equation}
Suppose that $\xi$ is a positive real irrational number and define the {\it upper} and {\it lower sets for $\xi$} by
\begin{equation*}
	\NN(\xi) = \left\{(a,b)\in \mathcal N: a > b\xi\right\}\quad\mbox{and}\quad\DD(\xi) = \left\{(a,b)\in \mathcal N: a < b\xi\right\}.
\end{equation*}
It is clear from these definitions that both $\NN(\xi)$ and $\DD(\xi)$ are closed under the addition defined in \eqref{AdditionND}, and moreover, 
$\mathcal N = \NN(\xi) \cup \DD(\xi)$ is a disjoint union.
For the purposes of this article, we adopt the convention that $\gcd(a,0) = a$ and that $a/0 = \infty$ for all $a\in \nat$.  As a result, we obtain that
\begin{equation*}
	\NN(\xi) = \left\{(a,b)\in \mathcal N: \frac{a}{b} > \xi\right\} \quad\mbox{and}\quad\DD(\xi) = \left\{(a,b)\in \mathcal N: \frac{a}{b} < \xi\right\}.
\end{equation*}
Additionally, we write $\GG = \{(a,b)\in \mathcal N: \gcd(a,b) = 1\}$.  Now define the subsets of $\NN_1(\xi)$ and $\NN_2(\xi)$ of $\NN(\xi)$ by
\begin{equation*}
	\NN_1(\xi) = \left\{ (a,b)\in \NN(\xi): \frac{a}{b} = \min\left\{ \frac{m}{n}: (m,n)\in \NN(\xi),\ m \leq a\right\}\right\}
\end{equation*}
and
\begin{equation*}
	\NN_2(\xi) = \left\{ (a,b)\in \NN(\xi): \frac{a}{b} = \min\left\{ \frac{m}{n}: (m,n)\in \NN(\xi),\ n\leq b\right\}\right\}.
\end{equation*}
Roughly speaking, we may think of the points in $\NN_1(\xi)$ and $\NN_2(\xi)$ as best approximations to $\xi$ from above.
Similarly, we define the analogous subsets $\DD_1(\xi)$ and $\DD_2(\xi)$ of $\DD(\xi)$ by
\begin{equation*}
	\DD_1(\xi) = \left\{ (a,b)\in \DD(\xi): \frac{a}{b} = \max\left\{ \frac{m}{n}: (m,n)\in \DD(\xi),\ m \leq a\right\}\right\}
\end{equation*}
and
\begin{equation*}
	\DD_2(\xi) = \left\{ (a,b)\in \DD(\xi): \frac{a}{b} = \max\left\{ \frac{m}{n}: (m,n)\in \DD(\xi),\ n\leq b\right\}\right\}.
\end{equation*}
Analogous to the definitions of $\NN_1(\xi)$ and $\NN_2(\xi)$, we interpret the points in $\DD_1(\xi)$ and $\DD_2(\xi)$ as best approximations to $\xi$ from below.
It is straightforward to make the following observations regarding the above sets.

\begin{prop} \label{NDBasics}
	Suppose that $\xi$ is a positive real irrational number.
	\begin{enumerate}[(i)]
		\item $(a,b)\in \NN(\xi)$ if and only if $(b,a)\in \DD(\xi^{-1})$
		\item $(a,b)\in \NN_1(\xi)$ if and only if $(b,a)\in \DD_2(\xi^{-1})$
		\item $(a,b)\in \NN_2(\xi)$ if and only if $(b,a)\in \DD_1(\xi^{-1})$
	\end{enumerate}
\end{prop}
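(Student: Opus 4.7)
My plan is to prove (i) by a direct manipulation of the defining inequalities and then derive (ii) and (iii) from (i) by exploiting a reciprocal-reverses-order duality together with the coordinate-swap bijection between $\NN(\xi)$ and $\DD(\xi^{-1})$ that it produces.

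For (i), I would unpack $(a,b)\in \NN(\xi)$ as the inequality $a > b\xi$. Since $\xi > 0$, multiplying both sides by $\xi^{-1}$ yields $b < a\xi^{-1}$, which is exactly the defining condition of $(b,a)\in \DD(\xi^{-1})$. The reverse implication is the same step run backwards. I would phrase this using the raw inequality $a > b\xi$ rather than $a/b > \xi$ in order to sidestep any awkwardness arising from the conventions $a/0 = \infty$ and $0/a = 0$; this way the boundary pairs with a zero coordinate are handled uniformly with the rest.

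For (ii) and (iii), the key observation is that the map $(m,n) \mapsto (n,m)$ is a bijection $\NN(\xi) \to \DD(\xi^{-1})$ by (i), and under this bijection $m/n$ is sent to its reciprocal $n/m$. Consequently, minimizing $m/n$ over some subset of $\NN(\xi)$ is equivalent, after the swap, to maximizing $m/n$ over the image subset in $\DD(\xi^{-1})$. Concretely, if $(a,b) \in \NN_1(\xi)$ realizes $\min\{m/n : (m,n)\in \NN(\xi),\, m \leq a\}$, then taking reciprocals and applying the swap shows that $b/a$ realizes $\max\{m/n : (m,n) \in \DD(\xi^{-1}),\, n \leq a\}$; here the constraint $m \leq a$ on the first coordinate in $\NN(\xi)$ is transported to the constraint $n \leq a$ on the second coordinate in $\DD(\xi^{-1})$, which is precisely the defining property of $\DD_2(\xi^{-1})$ at the point $(b,a)$. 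Part (iii) is the same argument applied with the roles of the two coordinates reversed: the constraint $n \leq b$ in the definition of $\NN_2(\xi)$ becomes the constraint $m \leq b$ after the swap, yielding the definition of $\DD_1(\xi^{-1})$ at $(b,a)$. Both directions of each biconditional work identically because the swap is an involution.

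Since every step is essentially bookkeeping, there is no substantive obstacle; the mildly delicate point is simply to track that the size constraint on the first (resp.\ second) coordinate transforms under the swap into a constraint on the second (resp.\ first) coordinate, and to verify that the edge cases with a zero coordinate remain consistent with the stated conventions.
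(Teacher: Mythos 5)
Your argument is correct: part (i) is the direct inequality manipulation $a>b\xi \iff b<a\xi^{-1}$, and parts (ii) and (iii) follow because the swap $(m,n)\mapsto(n,m)$ carries $\NN(\xi)$ bijectively onto $\DD(\xi^{-1})$, inverts the ratios (so minima become maxima, consistently with the conventions $a/0=\infty$ and $0/a=0$), and transports the bound on the first coordinate to a bound on the second and vice versa. The paper omits this proof as "sufficiently straightforward," and yours is exactly the bookkeeping it has in mind.
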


We also observe that, in general, we do not have $\NN_1(\xi) = \NN_2(\xi)$ or $\DD_1(\xi) = \DD_2(\xi)$.  For instance, if $0<\xi < 1/2$ then $(1,1)\in \NN_2(\xi)$ but $(1,1)\not \in \NN_1(\xi)$.  
However, we do obtain the following relationships among these sets.

\begin{prop}\label{Relations}
	Suppose that $\xi$ is a positive real irrational number.
	\begin{enumerate}[(i)]
		\item\label{Contains} $\NN_1(\xi)\subseteq \NN_2(\xi)$ and $\DD_2(\xi) \subseteq \DD_1(\xi)$
		\item\label{UpperEqual} If $\xi > 1$ then $\GG\cap \NN_1(\xi) = \GG\cap \NN_2(\xi)$
		\item\label{LowerEqual} If $\xi < 1$ then $\GG\cap\DD_1(\xi) = \GG\cap\DD_2(\xi)$
	\end{enumerate}
\end{prop}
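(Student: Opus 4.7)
Here is the plan: dispatch (i) with a symmetric two-line argument, prove (iii) directly, and deduce (ii) from (iii) via Proposition~\ref{NDBasics}. For $\NN_1(\xi) \subseteq \NN_2(\xi)$ in (i), suppose $(a,b) \in \NN_1(\xi)$ and some $(m,n) \in \NN(\xi)$ satisfies $n \leq b$ and $m/n < a/b$. Then $mb < an \leq ab$ forces $m < a$, producing a counterexample to the definition of $\NN_1(\xi)$; the companion containment $\DD_2(\xi) \subseteq \DD_1(\xi)$ follows by interchanging the roles of numerator and denominator in the inequalities.

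For (iii), fix $\xi \in (0,1)$ irrational and $(a,b) \in \GG \cap \DD_1(\xi)$; by (i) only the inclusion $\GG \cap \DD_1(\xi) \subseteq \DD_2(\xi)$ needs proof. The edge case $a = 0$ forces $b = 1$ via the convention $\gcd(0,b) = b$, and then $(0,1) \in \DD_2(\xi)$ is immediate. So assume $a \geq 1$. Applying the defining property of $\DD_1(\xi)$ to pairs of the form $(a,n')$ forces every $n'$ with $n' > a/\xi$ to satisfy $n' \geq b$; combined with $b > a/\xi$ (which follows from $(a,b) \in \DD(\xi)$), this pins down $b = \lfloor a/\xi \rfloor + 1$. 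Equivalently, $a$ lies in the open interval $((b-1)\xi, b\xi)$, which has length $\xi < 1$ and therefore contains at most one integer, which must be $\lfloor b\xi \rfloor$; hence $a = \lfloor b\xi \rfloor$. Now for any $(m,n) \in \DD(\xi)$ with $n \leq b$, the bound $m < n\xi$ yields $m \leq \lfloor n\xi \rfloor \leq \lfloor b\xi \rfloor = a$, and then $(a,b) \in \DD_1(\xi)$ gives $m/n \leq a/b$. This proves $(a,b) \in \DD_2(\xi)$.

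For (ii), I invoke Proposition~\ref{NDBasics}(ii),(iii) together with $\gcd(a,b) = \gcd(b,a)$: the involution $(a,b) \mapsto (b,a)$ sends $\GG \cap \NN_1(\xi)$ bijectively onto $\GG \cap \DD_2(\xi^{-1})$ and $\GG \cap \NN_2(\xi)$ bijectively onto $\GG \cap \DD_1(\xi^{-1})$. Since $\xi > 1$ forces $\xi^{-1} < 1$, the newly proved (iii) applied to $\xi^{-1}$ gives $\GG \cap \DD_1(\xi^{-1}) = \GG \cap \DD_2(\xi^{-1})$, and transporting back yields (ii). The principal obstacle is the identification $a = \lfloor b\xi \rfloor$, which is the key bridge between the numerator- and denominator-restricted best-approximation conditions; once that step is in hand, the $\DD_2$ verification collapses to the monotonicity $\lfloor n\xi \rfloor \leq \lfloor b\xi \rfloor$ and a single application of the $\DD_1$ hypothesis.
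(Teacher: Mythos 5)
Your proof is correct, but the heart of it runs in the opposite direction from the paper's. Part (i) is essentially the paper's argument (contrapositive plus cross-multiplication; you might note that the degenerate case $b=0$, where $a/b=\infty$, is vacuous since $n\leq b$ then forces $m/n=\infty$). For the main equivalence, the paper proves \eqref{UpperEqual} directly: for $(a,b)\in\GG\cap\NN_2(\xi)$ with $a,b>0$ it extracts the two-sided bound $(a-1)/b<\xi<a/b$, supposes a competitor $(m,n)$ with $m\leq a$ and $m/n<a/b$, and grinds out $a\leq b$, contradicting $\xi>1$; it then gets \eqref{LowerEqual} by the $(a,b)\mapsto(b,a)$, $\xi\mapsto\xi^{-1}$ symmetry of Proposition \ref{NDBasics}. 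You instead prove \eqref{LowerEqual} directly and transport to get \eqref{UpperEqual}, and your mechanism is different and arguably more illuminating: the $\DD_1$ condition applied to the pairs $(a,n')$ pins down $b=\lfloor a/\xi\rfloor+1$, equivalently $a\in((b-1)\xi,b\xi)$, and since that interval has length $\xi<1$ its unique integer is $\lfloor b\xi\rfloor$, giving the identity $a=\lfloor b\xi\rfloor$; the $\DD_2$ condition then falls out of the monotonicity of $n\mapsto\lfloor n\xi\rfloor$ with no contradiction argument. This makes the role of the hypothesis $\xi<1$ completely transparent (it is exactly the interval-length bound), yields an explicit description of the relevant pairs as $(\lfloor b\xi\rfloor,b)$, and in fact your argument for $a\geq1$ never uses coprimality, so it proves the slightly stronger inclusion $\DD_1(\xi)\subseteq\DD_2(\xi)$ for $\xi<1$ away from the pairs $(0,n)$. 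The paper's version is shorter on the page but hides the quantitative content; yours costs a few lines of bookkeeping with floors and buys a structural characterization.
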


Each element of $\GG\cap\NN_1(\xi)$ is called an {\it upper best approximation for $\xi$} and each element of $\GG\cap\DD_2(\xi)$ is called a {\it lower best approximation for $\xi$.}
As a cautionary note, our definition of upper best approximation is slightly more restrictive than what is normally seen in the literature (see \cite{Kimberling,Perron}, for instance).
Typically, the elements of $\GG\cap\NN_2(\xi)$ are called the upper best approximations to $\xi$.  However, in view of Proposition \ref{Relations}\eqref{Contains}, we find
our definition to be best suited to our purposes.  Nevertheless, under the assumption that $\xi > 1$, Proposition \ref{Relations}\eqref{UpperEqual} shows that our definitions of 
upper and lower best approximations are equivalent to the more standard definitions.  In this situation, there is a well-known method described in \cite[pp. 55--63]{Perron} 
(or see \cite[p. 123]{Kimberling} for a more brief discussion) for listing the upper and lower best approximations for $\xi$ which we summarize as a theorem.

\begin{thm} \label{ApproxFrac}
	Suppose that $\xi > 1$ is an irrational number having continued fraction expansion given by $\xi = [x_0;x_1,x_2,x_3,\ldots]$ and let $(a,b)\in \mathcal N\setminus \{(1,0),(x_0,1)\}$.  
	Then $(a,b)$ is an upper or lower best approximation to $\xi$ if and only if $\gcd(a,b) = 1$ and  there exists $n\in \nat$ and $1\leq x \leq x_n$ such that
	$a/b = [x_0;x_1,x_2,\ldots,x_{n-2},x_{n-1},x]$.
\end{thm}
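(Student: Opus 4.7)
The statement is a rephrasing of the classical characterization of one-sided best rational approximations to an irrational number in terms of its continued fraction expansion, so the plan is to reduce directly to that classical result (\cite[pp. 55--63]{Perron}). The only real work in the reduction is to match the paper's slightly restrictive definitions of upper/lower best approximation to the standard notions.

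First, I would verify that, under $\xi > 1$, the paper's notions agree with the ``best one-sided approximations of the first kind'' (i.e., coprime $(a,b)$ that give the best approximation from a given side among fractions with denominator at most $b$). On the lower side this is immediate from the paper's definition, which is already $\GG \cap \DD_2(\xi)$. On the upper side the paper uses the more restrictive $\GG \cap \NN_1(\xi)$, but Proposition \ref{Relations}\eqref{UpperEqual} asserts the equality $\GG \cap \NN_1(\xi) = \GG \cap \NN_2(\xi)$ precisely when $\xi > 1$, so the two notions coincide.

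Next, I would invoke the classical theorem: the one-sided best approximations of the first kind to $\xi$ are precisely the \emph{intermediate fractions}
\begin{equation*}
	\frac{x p_{n-1} + p_{n-2}}{x q_{n-1} + q_{n-2}} = [x_0; x_1, \ldots, x_{n-1}, x], \qquad n \geq 1,\ 1 \leq x \leq x_n,
\end{equation*}
where $p_k/q_k$ denote the convergents of $\xi$. The standard determinant identity $p_{n-1} q_{n-2} - p_{n-2} q_{n-1} = \pm 1$ shows these fractions are automatically in lowest terms, so the condition $\gcd(a,b) = 1$ in the theorem is free. As $x$ runs from $1$ to $x_n$ the resulting ratios form a monotone sequence approaching $\xi$ from one fixed side determined by the parity of $n$, and a routine comparison with any smaller denominator confirms that each is indeed a best approximation from that side.

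The exclusion of $(1,0)$ and $(x_0,1)$ in the hypothesis merely removes the two boundary cases of the parametrization, namely the degenerate ``$(-1)$st'' and $0$th convergents $\infty$ and $x_0$, which fall outside the range $n \geq 1$. The main obstacle is mostly bookkeeping: one has to verify that the indexing $1 \leq x \leq x_n$ exactly enumerates the intermediate fractions without duplication, with the endpoint $x = x_n$ at step $n$ being deliberately distinguished from an $x = 0$ boundary that would recover the previous convergent. Aside from this, the substantive content lives entirely in the classical Perron theorem together with Proposition \ref{Relations}\eqref{UpperEqual}.
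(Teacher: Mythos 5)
Your proposal matches the paper's own treatment: the paper gives no independent proof of Theorem \ref{ApproxFrac}, but instead uses Proposition \ref{Relations}\eqref{UpperEqual} to reconcile its slightly nonstandard definition of upper best approximation with the classical one when $\xi > 1$, and then cites the enumeration of one-sided best approximations by intermediate fractions from \cite[pp.~55--63]{Perron} (and \cite{Kimberling}). Your added remarks on coprimality via the determinant identity and on the excluded pairs $(1,0)$, $(x_0,1)$ are correct bookkeeping on top of the same reduction.
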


For comparison purposes, it is worth noting the connection between the above definitions and the definition of best approximation.  An ordered pair $(a,b)$ is called
a {\it best approximation\footnote{A rational number satisfying this definition is often called {\it best approximation to $\xi$ of the first kind}.  Indeed, there is a commonly used
definition for {\it best approximation of the second kind} which is more restrictive.  However, we believe the best approximation of the first kind is the proper analog of upper and 
lower best approximations as defined in this paper.  Hence, we only mention its definition here.} to $\xi$} if
\begin{enumerate}[(i)]
	\item $\gcd(a,b) = 1$
	\item If $|\xi - a/b| > |\xi - r/s|$ for some $(r,s)\in \mathcal N$ then $s > b$.
\end{enumerate}
If $(a,b)$ is a best approximation to $\xi>1$ then it follows that $(a,b)$ is either an upper best approximation or a lower best approximation to $\xi$.
However, the converse of this statement is false.  For instance, if we take $\xi = \log 3/\log 2 = 1.5849625\ldots$ then it can be shown that $(27,17)$ is an upper best approximation to $\xi$.  
On the other hand,
\begin{equation*}
	\left| \xi - \frac{27}{17}\right| = 0.00327279\ldots\quad\mbox{and}\quad \left| \xi - \frac{19}{12}\right| = 0.00162917\ldots
\end{equation*}
meaning that $(27,17)$ is not a best approximation to $\xi$.  Nevertheless, the upper and lower best approximations -- not the best approximations -- are
the relevant definitions for our mains results.

\section{Main results} \label{Mains}

As noted above, this article is devoted to studying the infimum sets for $\alpha = p^a/q^b$.  In this situation, every factorization of $\alpha$ has the form
\begin{equation*}
	\left( \frac{p^{a_1}}{q^{b_1}}, \frac{p^{a_2}}{q^{b_2}},\ldots, \frac{p^{a_N}}{q^{b_N}}\right),
\end{equation*}
where $a = a_1 + a_2 + \cdots + a_N$ and $b = b_1 + b_2 + \cdots + b_N$.  The first of our two main results considers the general case $(a,b)\in \mathcal N$.

\begin{thm} \label{FirstAxioms}
	Let $p$ and $q$ be distinct primes and set $\xi = \log q/\log p$.  Assume that $(a,b)\in \mathcal N$ and write $\alpha = p^a/q^b$.  If
	\begin{equation*}
		\left( \frac{p^{a_1}}{q^{b_1}}, \frac{p^{a_2}}{q^{b_2}},\ldots, \frac{p^{a_N}}{q^{b_N}}\right)
	\end{equation*}
	is a factorization of $\alpha$ which attains the infimum in $m_t(\alpha)$ for some $t > 1$ then, for every $1\leq n \leq N$,
	$(a_n,b_n)\in \GG\cap\NN_2(\xi)$ or $(a_n,b_n)\in \GG\cap\DD_1(\xi)$.
\end{thm}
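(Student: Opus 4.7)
The plan is to argue by contradiction. First I would record that when $p,q$ are distinct primes, $p^{a_n}$ and $q^{b_n}$ are automatically coprime, so
\[
m(p^{a_n}/q^{b_n}) = \log\max\{p^{a_n},\, q^{b_n}\} = \max\{a_n \log p,\, b_n \log q\},
\]
which equals $a_n \log p$ precisely when $(a_n,b_n)\in\NN(\xi)$ and equals $b_n \log q$ precisely when $(a_n,b_n)\in\DD(\xi)$. Condition (iii) of the factorization definition automatically places $(a_n,b_n)\in\GG$, since $p^{a_n}/q^{b_n}$ is already in lowest terms, so the real content is to show $(a_n,b_n)\in\NN_2(\xi)\cup\DD_1(\xi)$.

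Assume for contradiction that some $(a_n,b_n)$ lies in neither $\NN_2(\xi)$ nor $\DD_1(\xi)$. Using the disjoint decomposition $\mathcal N = \NN(\xi)\sqcup\DD(\xi)$, I split into two cases: $(a_n,b_n)\in\NN(\xi)\setminus\NN_2(\xi)$, or $(a_n,b_n)\in\DD(\xi)\setminus\DD_1(\xi)$. I will treat the first case in detail; the second is mirror-symmetric and could also be reduced to the first via Proposition \ref{NDBasics} with $p$ and $q$ interchanged.

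In the first case, failure of the $\NN_2$ condition produces $(a',b')\in\NN(\xi)$ with $b'\leq b_n$ and $a'/b' < a_n/b_n$, whence $a'<a_n$ by cross-multiplication. The replacement move is to swap the single factor $p^{a_n}/q^{b_n}$ for the pair $p^{a'}/q^{b'}$ and $p^{a_n-a'}/q^{b_n-b'}$. The crucial sub-check is that the new pair $(a_n-a',b_n-b')$ still lies in $\NN(\xi)$: subtracting $a'>b'\xi$ from $a_n>b_n\xi$ gives $a_n-a'>(b_n-b')\xi$, with the convention $a/0=\infty$ covering the case $b'=b_n$. All other factorization requirements survive automatically, since powers of $p$ are coprime to powers of $q$. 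With both new pairs in $\NN(\xi)$, the contribution to $\sum m(\cdot)^t$ from this slot changes from $(a_n\log p)^t$ to $(\log p)^t\bigl(a'^t+(a_n-a')^t\bigr)$.

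The inequality that finishes the argument is
\[
a'^t + (a_n - a')^t < a_n^t \qquad (t>1,\ 0<a'<a_n),
\]
which follows from $u^t+(1-u)^t < u+(1-u)=1$ for $u=a'/a_n\in(0,1)$ and $t>1$. This strictly decreases $\sum m(\cdot)^t$, contradicting minimality. The case $(a_n,b_n)\in\DD(\xi)\setminus\DD_1(\xi)$ runs in parallel: a witness $(a',b')\in\DD(\xi)$ with $a'\leq a_n$ and $a'/b'>a_n/b_n$ forces $b'<b_n$, both $(a',b')$ and $(a_n-a',b_n-b')$ remain in $\DD(\xi)$, and the analogous inequality $b'^t+(b_n-b')^t<b_n^t$ delivers the contradiction. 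I expect the main obstacle to be keeping the two new pairs on the same side of $\xi$ as the original, since otherwise a factor's Mahler-measure formula would silently switch between the $a\log p$ and $b\log q$ expressions, invalidating the strict convexity estimate on which everything hangs.
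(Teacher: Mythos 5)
Your overall strategy---replace a ``bad'' factor by two factors lying on the same side of $\xi$, observe that the Mahler measure splits additively across such a replacement, and invoke strict superadditivity of $x\mapsto x^t$ for $t>1$---is exactly the paper's strategy (its Lemmas on irreducibility and on biased factorizations rolled into one argument). However, there are two genuine gaps.

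First, the claim that condition (iii) of the definition of a factorization ``automatically places $(a_n,b_n)\in\GG$'' is false: $\GG$ is the set of pairs with $\gcd(a_n,b_n)=1$, i.e.\ the gcd of the \emph{exponents}, not of $p^{a_n}$ and $q^{b_n}$. For example, with $\xi=\log 3/\log 2$ the pair $(4,2)$ lies in $\NN_2(\xi)$ but not in $\GG$, so your argument never rules it out. This case must be handled separately; the paper does so by writing, for $d=\gcd(a_n,b_n)>1$,
\begin{equation*}
(a_n,b_n)=\left(\tfrac{a_n}{d},\tfrac{b_n}{d}\right)+\left(\tfrac{a_n(d-1)}{d},\tfrac{b_n(d-1)}{d}\right),
\end{equation*}
a split into two pieces with the same ratio (hence on the same side of $\xi$), after which your convexity inequality applies verbatim. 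The omission is easily repaired, but as written the theorem's conclusion $(a_n,b_n)\in\GG\cap\NN_2(\xi)$ is not established.

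Second, the ``crucial sub-check'' is justified incorrectly: you cannot subtract same-direction inequalities, so $a'>b'\xi$ and $a_n>b_n\xi$ do \emph{not} give $a_n-a'>(b_n-b')\xi$. Concretely, with $\xi=\log 3/\log 2$ both $(5,3)$ and $(2,1)$ lie in $\NN(\xi)$, yet $(3,2)\in\DD(\xi)$. The conclusion you need is nevertheless true, but only because the witness satisfies $a'/b'<a_n/b_n$ together with $b'\le b_n$: when $0<b'<b_n$ one multiplies $a'/b'>\xi$ by $b_n-b'>0$ to get $\tfrac{a'b_n}{b'}-a'>\xi(b_n-b')$ and then uses $a'b_n<a_nb'$ to replace $\tfrac{a'b_n}{b'}$ by $a_n$ (the case $b'=b_n$ being handled by the convention $a/0=\infty$). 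This ratio comparison is precisely the content of the paper's irreducibility lemma and is, by your own closing remark, the step ``on which everything hangs,'' so it needs the correct proof rather than the invalid subtraction.
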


Unfortunately, we cannot conclude from Theorem \ref{FirstAxioms} that $(a_n,b_n)$ is an upper or lower best approximation to $\xi$. 
That is, we cannot conclude that $(a_n,b_n)\in \GG\cap\NN_1(\xi)$ or that $(a_n,b_n)\in \GG\cap\DD_2(\xi)$.  
Indeed, consider for instance $\alpha = 8/25 = 2^3/5^2$ so that $\xi = \log 5/\log 2$.  In this case, it can be shown that
\begin{equation} \label{ExampleFail}
	\left( \frac{4}{5}, \frac{2}{5}\right) = \left( \frac{2^2}{5^1}, \frac{2^1}{5^1}\right) 
\end{equation}
attains the infimum in $m_t(\alpha)$ for all $t \geq 1$.  However, we have that
\begin{equation*}
	\frac{1}{1} < \frac{2}{1} < \frac{\log 5}{\log 2}
\end{equation*}
meaning that $(1,1) \not\in \DD_2(\xi)$.
Nevertheless, we are able to produce the desired stronger conclusion under the additional assumption that $(a,b)$ is an upper or lower best approximation to $\xi$.

\begin{thm} \label{Main}
	Suppose $p$ and $q$ are distinct primes and $\xi = \log q/\log p$.  Assume that $(a,b)$ is an upper or lower best approximation for $\xi$ and set $\alpha = p^a/q^b$.  If
	\begin{equation*} 
		\left( \frac{p^{a_1}}{q^{b_1}}, \frac{p^{a_2}}{q^{b_2}},\ldots, \frac{p^{a_N}}{q^{b_N}}\right)
	\end{equation*}
	is a factorization which attains the infimum in $m_t(\alpha)$ for some $t > 1$ then, for every $1\leq n\leq N$, $(a_n,b_n)$ is an upper or lower best approximation for $\xi$.
\end{thm}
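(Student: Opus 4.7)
The plan is to reduce to $\xi > 1$ by symmetry and then to rule out a factor in $\GG \cap \DD_1(\xi) \setminus \DD_2(\xi)$ through an exchange argument.

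Since $m_t(\alpha) = m_t(\alpha^{-1})$, replacing $\alpha = p^a/q^b$ by $q^b/p^a$ swaps $(p,q)$ with $(q,p)$ and $(a,b)$ with $(b,a)$, sending $\xi$ to $\xi^{-1}$. Proposition~\ref{NDBasics} shows that this operation carries upper best approximations to lower best approximations and vice versa, so both the hypothesis and the conclusion of Theorem~\ref{Main} are preserved. Thus we may assume $\xi > 1$, in which case Proposition~\ref{Relations}(ii) yields $\GG \cap \NN_1(\xi) = \GG \cap \NN_2(\xi)$. Together with Theorem~\ref{FirstAxioms}, this places each $(a_n, b_n)$ in $\GG \cap (\NN_1(\xi) \cup \DD_1(\xi))$, reducing the task to ruling out factors in $\GG \cap \DD_1(\xi) \setminus \DD_2(\xi)$.

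Suppose, toward contradiction, that $(c, d) := (a_{n_0}, b_{n_0}) \in \GG \cap \DD_1(\xi) \setminus \DD_2(\xi)$. Then there is a lower best approximation $(c', d') \in \GG \cap \DD_2(\xi)$ with $d' \leq d$ and $c'/d' > c/d$, and the membership $(c, d) \in \DD_1(\xi)$ forces $c' > c$. Write $e = c' - c > 0$ and $f = d - d' \geq 0$. Recalling that $m(p^{a_n}/q^{b_n}) = a_n \log p$ for $(a_n, b_n) \in \NN(\xi)$ and equals $b_n \log q$ for $(a_n, b_n) \in \DD(\xi)$, I would construct a competing factorization of $\alpha$ by replacing $(c, d)$ by $(c', d')$ and absorbing the discrepancy $(e, -f)$ among the remaining factors $v_k = (a_k, b_k)$, each becoming $v_k' = (a_k - e_k, b_k + f_k)$ subject to $\sum e_k = e$, $\sum f_k = f$, and $e_k \leq a_k$. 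The change in the $t$-th power of $m_t$ is
\[
\Delta \;=\; \bigl[(d'\log q)^t - (d\log q)^t\bigr] + \sum_{k \neq n_0} \bigl[m(v_k')^t - m(v_k)^t\bigr],
\]
and optimality forces $\Delta \geq 0$. Admissibility requires $\sum_{k \neq n_0} a_k \geq e$; if this failed we would have $a < c'$, but in either sub-case of the hypothesis the best-approximation property of $(a,b)$ is then contradicted directly: for $(a,b) \in \NN_1(\xi)$ one follows the chain $a > b\xi \geq d\xi \geq d'\xi > c'$, and for $(a,b) \in \DD_2(\xi)$ the extremality of $a/b$ among $\DD(\xi)$-pairs with second coordinate at most $b$ forces $ad' \geq bc'$, incompatible with $a < c'$ once $d' \leq b$.

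The main obstacle is to select the shifts $\{e_k, f_k\}$ so that $\Delta < 0$ strictly. If the $a$-decrement can be directed onto an $\NN(\xi)$-factor $v_k$, a strict decrease of $\bigl(a_k^t - (a_k - e_k)^t\bigr)(\log p)^t$ appears in the second bracket of $\Delta$, while the first bracket is nonpositive, yielding $\Delta < 0$. If on the other hand every compensating shift must be routed through $\DD(\xi)$-factors, the $a$-decrement alone leaves the Mahler measures unchanged, but when $d' < d$ the $b$-increment $f$ must inflate at least one $b_k$, producing a strict decrease via convexity of $x \mapsto x^t$ once concentrated on a factor with $b_k < d'$. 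When even this fails, one arranges the distribution to create a coprimality failure $\gcd(a_k - e_k, b_k + f_k) > 1$, and splitting the offending $v_k'$ into $\gcd$ equal pieces produces the required strict decrease by the same convexity. The heart of the argument is in verifying, using Theorem~\ref{ApproxFrac} and the continued-fraction characterization of $(a, b)$, that at least one of these three mechanisms can always be activated, thereby contradicting the optimality of the original factorization.
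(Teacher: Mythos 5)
Your reduction to $\xi>1$ and your disposal of the $\NN(\xi)$-factors via Proposition \ref{Relations}(ii) are correct, and you have correctly isolated the remaining task: ruling out a factor $(c,d)\in\GG\cap\DD_1(\xi)\setminus\DD_2(\xi)$. The gap is in the exchange argument that is supposed to do this. You replace the offending factor by a different lower best approximation $(c',d')$ and must then redistribute the discrepancy $(c'-c,\,d-d')$ over the remaining factors, and you yourself concede that ``the heart of the argument'' is verifying that one of your three mechanisms always yields $\Delta<0$. That verification is exactly the hard part and it is absent. In particular, when $d'<d$ the $f=d-d'$ extra powers of $q$ may be forced onto $\DD(\xi)$-factors, each of which then has its Mahler measure strictly increased; your convexity mechanism needs a receiving factor with $b_k<d'$, which need not exist, and the coprimality-splitting mechanism is not shown to cover whatever remains. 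As written, the proof does not close.

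The paper's proof avoids this entirely by making a \emph{one-unit} exchange, built on the notion of a boundary point. The key fact (Lemma \ref{BoundaryBA}) is that for $(c,d)\in\GG\cap\DD_1(\xi)$, membership in $\DD_2(\xi)$ is equivalent to $(c+1,d)\in\NN(\xi)$; hence an offending factor satisfies $(c+1,d)\in\DD(\xi)$ and can absorb one extra power of $p$ with \emph{no change} to its Mahler measure. No powers of $q$ ever move. The single power of $p$ is taken from some factor lying in $\NN(\xi)$, whose measure then strictly drops for every $t>0$; such a factor must exist because otherwise the modified product would be less than $1$, contradicting the fact that $(a,b)$ --- being a best approximation, hence a boundary point by Lemma \ref{BoundaryBA} --- satisfies $(a+1,b)\in\NN(\xi)$. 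This is the content of Lemma \ref{BoundaryPointFactorization}, and it is the only place the hypothesis on $(a,b)$ is used. If you want to salvage your approach, the fix is to replace your multi-unit swap by this single-unit transfer: the useful characterization of $\DD_1(\xi)\setminus\DD_2(\xi)$ is not ``a better approximation with smaller denominator exists'' but ``$(c+1,d)$ still lies in $\DD(\xi)$.''
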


It follows from the work of \cite{DubSmyth2} and \cite{SamuelsCollection} that the trivial factorization $(\alpha)$ attains the infimum in $m_t(\alpha)$ for all
$t\leq 1$ provided that $\alpha\in \rat$. Consequently, Theorem \ref{Main} allows us to determine an infimum set for $\alpha$ using the upper and lower best rational approximations for $\xi$.  
We define
\begin{equation} \label{BasicInfSet}
	\mathcal B(\alpha) = \left\{ \left( \frac{p^{a_1}}{q^{b_1}},\cdots, \frac{p^{a_N}}{q^{b_N}}\right) \in \mathcal F(\alpha): (a_n,b_n)\in \GG \cap (\NN_1(\xi) \cup \DD_2(\xi)) \right\},
\end{equation}
so that $\mathcal B(\alpha)$ is the set of all factorizations of $\alpha$ which use only upper best approximations or lower best approximations for $\xi$.  

\begin{cor} \label{MainCor}
	Suppose $p$ and $q$ are distinct primes and $\xi = \log q/\log p$.  If $(a,b)$ is an upper or lower best approximation for $\xi$ then
	$\mathcal B(p^a/q^b)$ is an infimum set for $p^a/q^b$.
\end{cor}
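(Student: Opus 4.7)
The plan is a short case-split on $t$, combining Theorem \ref{Main} with the cited fact that the trivial factorization is optimal for $t\leq 1$. Before splitting, I would record two preliminary observations. Since $\mathcal B(\alpha)\subseteq \mathcal F(\alpha)$, finiteness of $\mathcal B(\alpha)$ is automatic from Theorem \ref{RationalAttain}. Second, the singleton $(\alpha)=(p^a/q^b)$ belongs to $\mathcal B(\alpha)$: the defining conditions (i)--(iii) of a factorization reduce for $N=1$ to $\gcd(p^a,q^b)=1$ and $\alpha\ne 1$, and the latter holds because the hypothesis that $(a,b)$ is an upper or lower best approximation forces $a/b\ne\xi$, hence $a\log p\ne b\log q$. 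The membership condition in \eqref{BasicInfSet} is then literally the hypothesis of the corollary.

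With these in hand, fix $t>0$. If $t\leq 1$, the result of \cite{DubSmyth2, SamuelsCollection} quoted just above the statement yields $m_t(\alpha)=m(\alpha)$, attained by $(\alpha)$; since $(\alpha)\in\mathcal B(\alpha)$, the minimum of $\bigl(\sum m(\alpha_n)^t\bigr)^{1/t}$ over $\mathcal B(\alpha)$ coincides with $m_t(\alpha)$. If instead $t>1$, Theorem \ref{RationalAttain} produces a factorization in $\mathcal F(\alpha)$ that attains $m_t(\alpha)$, and Theorem \ref{Main} then forces each coordinate $(a_n,b_n)$ of this factorization to be an upper or lower best approximation to $\xi$, i.e.\ to lie in $\GG\cap(\NN_1(\xi)\cup\DD_2(\xi))$. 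Thus the attaining factorization already lies in $\mathcal B(\alpha)$, and again the minimum over $\mathcal B(\alpha)$ equals $m_t(\alpha)$. Combining the two ranges of $t$ gives that $\mathcal B(\alpha)$ is an infimum set.

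There is essentially no obstacle in the argument itself; all the nontrivial content has been packed into Theorem \ref{Main}. The only points one has to watch are that the singleton $(\alpha)$ genuinely qualifies as an element of $\mathcal B(\alpha)$ under the hypothesis on $(a,b)$, and that the hypothesis $t>1$ in Theorem \ref{Main} dovetails cleanly with the complementary range $t\leq 1$, which is handled by the earlier quoted fact. Both are easy verifications, so the corollary follows immediately from the two preliminary observations plus the two-line case split on $t$.
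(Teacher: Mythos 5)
Your proof is correct and matches the paper's treatment: the paper derives the corollary in one sentence from Theorem \ref{Main} together with the quoted fact that the trivial factorization attains the infimum for $t\leq 1$, which is exactly your case split. Your additional checks (that $(\alpha)$ itself lies in $\mathcal B(\alpha)$ and that Theorem \ref{RationalAttain} supplies an attaining factorization for $t>1$) are the right details to verify and are all sound.
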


All of the pairs $(a_n,b_n)$ appearing in \eqref{BasicInfSet} may be determined by examining the continued fraction expansion for $\log q/\log p$ in accordance with 
Theorem \ref{ApproxFrac}.  This technique will be common practice throughout the applications to be discussed in the next section.

\section{Examples, Applications and Open Questions} \label{Apps}

Let $\alpha$ be an arbitrary
algebraic number and suppose that $ A = (\alpha_1,\alpha_2,\ldots,\alpha_N)$ and $B = (\beta_1,\beta_2,\ldots,\beta_M)$ are product representations of $\alpha$.
We say that $A$ is {\it equivalent} to $B$ if $N=M$ and there exists a bijection $\sigma:\{1,2,\ldots,N\}\to \{1,2,\ldots,N\}$ such that 
$\alpha_n = \beta_{\sigma(n)}$ for all $1\leq n\leq N$.  In this case, we write $A\sim B$ and note that $\sim$ defines an equivalence relation on the set of all product 
representations of $\alpha$.
If $A = (\alpha_1,\alpha_2,\cdots,\alpha_N)$ is a product representation of $\alpha$, we define the {\it measure function of $A$} to be
the function $f_A:(0,\infty)\to [0,\infty)$ given by
\begin{equation*}
	f_A(t) = \left(\sum_{k=1}^N m(\alpha_k)^t\right)^{1/t}.
\end{equation*}
If $A\sim B$, it is obvious that $f_A  = f_B$.

\subsection{The Characteristic Transformation of $\alpha$}
In order to apply the results of Section \ref{Mains}, it will be helpful to express the elements of $\mathcal B(\alpha)$ as vectors.  
In this way, we will be able to convert the question of determining $\mathcal B(\alpha)$ into a linear algebra question.
For simplicity, we shall now write $\mathcal B(\alpha)$ to denote the set of equivalence classes of points in \eqref{BasicInfSet}.  Suppose that $p$ and $q$ are primes
with $q > p$ and set $\xi = \log q/\log p$.  Further suppose that $(a,b)$ is an upper or lower best approximation for $\xi$ and set $\alpha = p^a/q^b$.  Let
\begin{equation} \label{BAs}
	(a_1,b_1), (a_2,b_2) ,\ldots, (a_N,b_N)
\end{equation}
be the complete list of upper or lower best approximations to $\xi$ with $a_n \leq a$ and $b_n \leq b$.  Moreover, assume that $b_1 \leq b_2 \leq \cdots \leq b_N$ and if 
$b_n = b_{n+1}$ then $a_n \leq a_{n+1}$.  We note that the points listed in \eqref{BAs} depend only on $\alpha$.
The {\it characteristic transformation of $\alpha$} is defined to be the linear transformation $T_\alpha: \real^N \to \real^2$ given by
\begin{equation*}
	T_\alpha = \left( \begin{array}{cccc} a_1 & a_2 & \cdots & a_N \\ b_1 & b_2 & \cdots & b_N \end{array} \right).
\end{equation*}
It is easily verified that the rows of $T_\alpha$ are linearly independent over $\real$, which implies that $T_\alpha$ is a surjection and $\dim_\real(\ker T_\alpha) = N-2$.
A {\it factorization vector for $\alpha$} is a point $\xx = (x_1,x_2,\ldots,x_N)^T\in \real^N$ satisfying
\begin{enumerate}[(i)]
	\item $T_\alpha(\xx) = (a,b)^T$
	\item $x_n\in \nat_0$ for all $1\leq n\leq N$.
\end{enumerate}
We let $\mathcal V(\alpha)$ denote the set of all factorization vectors for $\alpha$, and for each $\xx\in \mathcal V(\alpha)$, define the {\it factorization associated to $\xx$} by
\begin{equation} \label{NewFactor}
	A_\xx = \left( \underbrace{ \frac{p^{a_1}}{q^{b_1}},\cdots,\frac{p^{a_1}}{q^{b_1}}}_{x_1\mbox{ times}},  
				 \underbrace{\frac{p^{a_2}}{q^{b_2}},\cdots,\frac{p^{a_2}}{q^{b_2}}}_{x_2\mbox{ times}}, \cdots\cdots,
				  \underbrace{\frac{p^{a_N}}{q^{b_N}},\cdots,\frac{p^{a_N}}{q^{b_N}}}_{x_N\mbox{ times}} \right).
\end{equation}
The following observation allows us to view elements of $\mathcal B(\alpha)$ as factorization vectors.

\begin{thm} \label{VectorConversion}
	If $\alpha$ satisfies the hypotheses of Theorem \ref{Main} then the map $\phi:\mathcal V(\alpha)\to \mathcal B(\alpha)$ given by $\phi(\xx) = A_\xx$
	is a bijection.
\end{thm}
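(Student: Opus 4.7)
The plan is to break the claim into three standard pieces—that $\phi$ lands in $\mathcal B(\alpha)$, that $\phi$ is injective, and that $\phi$ is surjective—and verify each directly from the definitions. No new machinery should be needed beyond the setup surrounding $T_\alpha$ and the list \eqref{BAs}.

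First I would check well-definedness. Unpacking the defining condition $T_\alpha(\xx) = (a,b)^T$ yields $\sum_n x_n a_n = a$ and $\sum_n x_n b_n = b$, so the product of the entries of $A_\xx$ equals $p^a/q^b = \alpha$. The three factorization axioms are then essentially automatic: $p^{a_n}, q^{b_n} \in \nat$ since $a_n, b_n \ge 0$; $p^{a_n}/q^{b_n} \ne 1$ since $(a_n,b_n) \ne (0,0)$ and $p \ne q$; and the coprimality requirement $\gcd(p^{a_m}, q^{b_n}) = 1$ is immediate because $p$ and $q$ are distinct primes. By construction each $(a_n, b_n)$ appears in \eqref{BAs} and is therefore an upper or lower best approximation for $\xi$, so $A_\xx \in \mathcal B(\alpha)$.

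For injectivity the key observation is that the rationals $p^{a_n}/q^{b_n}$ attached to the distinct columns of $T_\alpha$ are pairwise distinct: if $p^{a_m}/q^{b_m} = p^{a_n}/q^{b_n}$ then $p^{a_m - a_n} = q^{b_m - b_n}$, and unique factorization in $\intg$ together with $p \ne q$ forces $a_m = a_n$ and $b_m = b_n$. It follows that the equivalence class of $A_\xx$ determines the multiplicities $x_n$, so $\xx$ can be read off from $\phi(\xx)$.

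The main step, and the one I expect will require the most care, is surjectivity. Given any representative $(p^{c_1}/q^{d_1}, \ldots, p^{c_M}/q^{d_M}) \in \mathcal B(\alpha)$, each pair $(c_k, d_k)$ lies in $\GG \cap (\NN_1(\xi) \cup \DD_2(\xi))$ by definition of $\mathcal B(\alpha)$, and the product condition forces $\sum_k c_k = a$ and $\sum_k d_k = b$. Since each $c_k, d_k \ge 0$, this yields $c_k \le a$ and $d_k \le b$, which is exactly the cutoff used to form the list \eqref{BAs}. Hence every $(c_k, d_k)$ coincides with some column $(a_{n_k}, b_{n_k})$ of $T_\alpha$, and setting $x_n = \#\{k : n_k = n\}$ produces a non-negative integer vector $\xx$ with $T_\alpha(\xx) = (a,b)^T$, giving $\xx \in \mathcal V(\alpha)$ with $\phi(\xx)$ equivalent to the original factorization. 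The only thing that would need extra verification is that \eqref{BAs} genuinely exhausts every upper or lower best approximation satisfying these bounds, which is immediate from how that list is defined.
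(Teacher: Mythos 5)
Your proposal is correct and follows essentially the same route as the paper's proof: surjectivity is obtained by observing that every entry of a factorization in $\mathcal B(\alpha)$ must be one of the best approximations listed in \eqref{BAs} (since the exponents are bounded by $a$ and $b$), so its multiplicity vector lies in $\mathcal V(\alpha)$. You simply spell out the well-definedness and injectivity checks that the paper dismisses as "easily verified," which is fine.
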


If $S\subseteq \mathcal V(\alpha)$ is 
such that $\phi(S)$ is an infimum set for $\alpha$, then we simply say that $S$ is an {\it infimum set for $\alpha$}.  In this language, Corollary \ref{MainCor}
asserts that $\mathcal V(\alpha)$ is indeed an infimum set for $\alpha$.
For $\xx\in \mathcal V(\alpha)$ we write $f_\xx = f_{\phi(\xx)}$ and we say that $f_\xx$ is the {\it measure function of $\xx$}.  If $\xx = (x_1,\ldots,x_N)^T$ then
\begin{equation} \label{FactorMeasure}
	f_\xx(t) = \left(\sum_{n=1}^N x_n m\left( \frac{p^{a_n}}{q^{b_n}}\right)^t\right)^{1/t},
\end{equation}
and it follows from Theorem \ref{VectorConversion} and Corollary \ref{MainCor} that $m_t(\alpha)  = \min\left\{ f_\xx(t): \xx\in \mathcal V(\alpha)\right\}$.

\begin{ex} \label{32over27}
	Consider $\alpha = 32/27 = 2^5/3^3$.  We shall attempt to sketch the graph of $t\mapsto m_t(\alpha)$. 
	In this case, we have that $\xi = \log 3/\log 2$ so that the beginning of the continued fraction expansion for $\xi$ is given by
	\begin{equation*}
		\xi = [1;1,1,2,2,3,1,5,2,23,2,2,\ldots].
	\end{equation*}
	In view of Theorem \ref{ApproxFrac}, the first several upper and lower best approximations to $\xi$ are
	\begin{equation*}
		\{(1,0),(1,1),(2,1),(3,2),(5,3),(8,5),(11,7),(19,12),\ldots\},
	\end{equation*}
	and therefore, the characteristic transformation of $\alpha$ is given by
	\begin{equation*}
		T_\alpha = \left( \begin{array}{ccccc} 1 & 1 & 2 & 3 & 5 \\ 0 & 1 & 1 & 2 & 3 \end{array} \right).
	\end{equation*}
	By solving the system $T_\alpha(\xx) = (5,3)^T$ for $\xx$ having non-negative integer entries, we now obtain
	\begin{equation*} 
			\mathcal V(\alpha) = \left\{ \begin{pmatrix} 0 \\ 0\\ 0\\ 0\\ 1\end{pmatrix},
							\begin{pmatrix} 0 \\ 0 \\ 1 \\ 1 \\0 \end{pmatrix},
							\begin{pmatrix} 1 \\ 1 \\ 0 \\ 1\\ 0 \end{pmatrix},
							\begin{pmatrix} 0 \\ 1\\ 2 \\ 0 \\ 0 \end{pmatrix},
							\begin{pmatrix} 1 \\ 2\\ 1 \\ 0 \\ 0 \end{pmatrix},
							\begin{pmatrix} 2 \\ 3 \\ 0 \\ 0 \\ 0 \end{pmatrix}  \right\}.
	\end{equation*}
	According to Corollary \ref{MainCor}, this set forms an infimum set for $\alpha$.  In order to sketch the graph of $t\mapsto m_t(\alpha)$ we recall that 
	$m_t(\alpha)  = \min\left\{ f_\xx(t): \xx\in \mathcal V(\alpha)\right\}$ so it will be useful to obtain graphs for $f_\xx(t)$ for each $\xx\in \mathcal V(\alpha)$.  Setting
	$\xx = (x_1,x_2,x_3,x_4,x_5)^T$ and applying \eqref{FactorMeasure} we deduce that
	\begin{equation*}
		f_\xx(t) = \left( x_1m\left(\frac{2^1}{3^0}\right)^t + x_2m\left(\frac{2^1}{3^1}\right)^t + x_3m\left(\frac{2^2}{3^1}\right)^t + x_4m\left(\frac{2^3}{3^2}\right)^t
			+ x_5m\left(\frac{2^5}{3^3}\right)^t\right)^{1/t},
	\end{equation*}
	and hence,
	\begin{equation*}
		f_\xx(t) = \left( x_1(\log 2)^t + x_2(\log 3)^t + x_3(\log 4)^t + x_4(\log 9)^t + x_5(\log 32)^t\right)^{1/t}
	\end{equation*}
	The graphs of the measure functions $f_\xx(t)$, for each $\xx\in \mathcal V(\alpha)$, may be found in Figure \ref{fig:32over27full}.
	\begin{figure}[p]
		\centering
		\includegraphics[height=8cm,width=13cm]{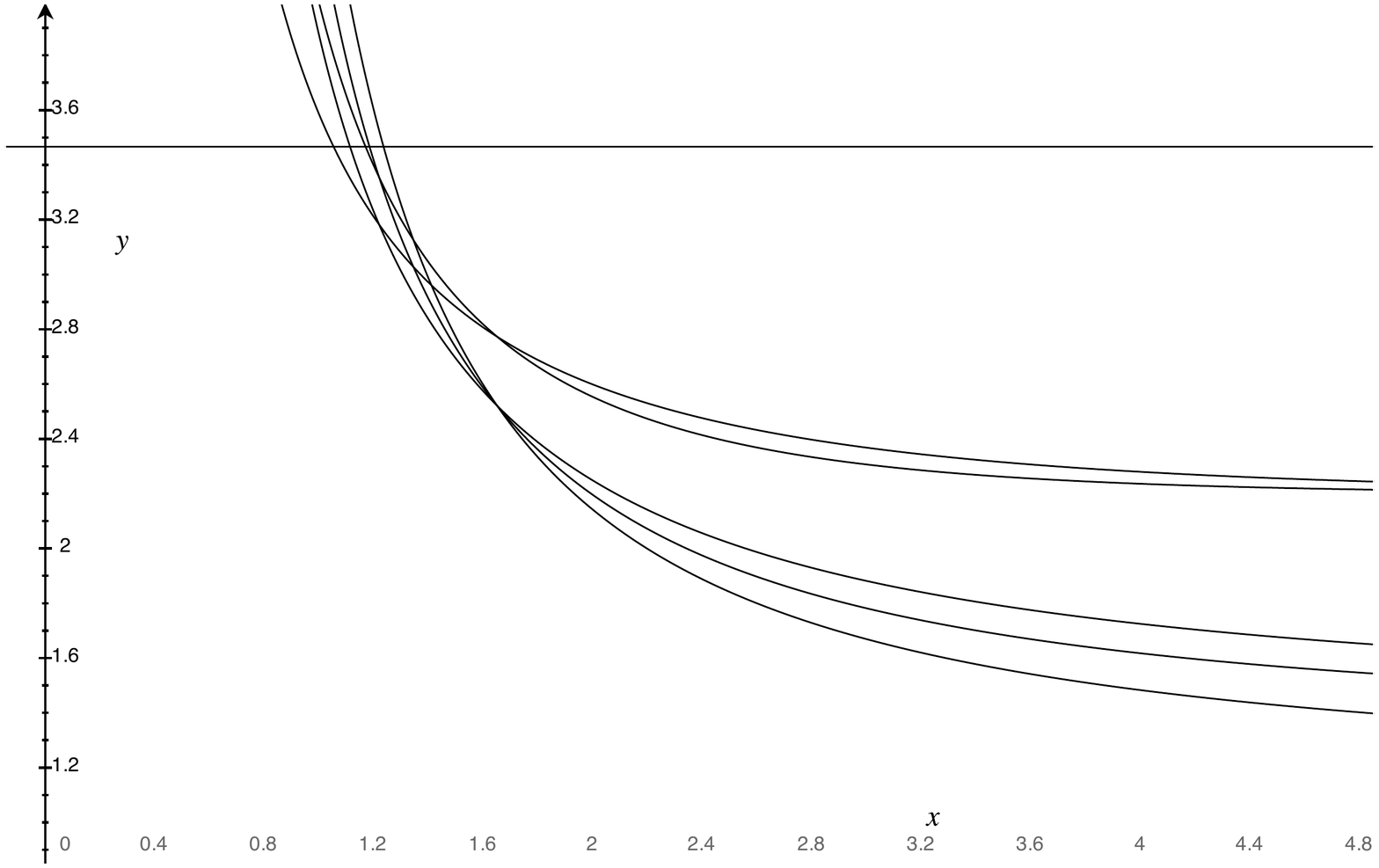}
		\caption{Measure functions of vectors in $\mathcal V(32/27)$}
		\label{fig:32over27full}
	\end{figure}
	By examining the curves in Figure \ref{fig:32over27full} we note that the set
	\begin{equation*}
		\mathcal S(\alpha) = \left\{ \begin{pmatrix} 0 \\ 0\\ 0\\ 0\\ 1\end{pmatrix},
							\begin{pmatrix} 0 \\ 0 \\ 1 \\ 1 \\0 \end{pmatrix},
							\begin{pmatrix} 0 \\ 1\\ 2 \\ 0 \\ 0 \end{pmatrix},
							\begin{pmatrix} 2 \\ 3 \\ 0 \\ 0 \\ 0 \end{pmatrix}  \right\}.
	\end{equation*}
	is also an infimum set for $\alpha$.  The graphs of the measure functions of the points in $S$ may be found in Figure \ref{fig:32over27clean}.
		\begin{figure}[p]
		\centering
		\includegraphics[height=8cm,width=13cm]{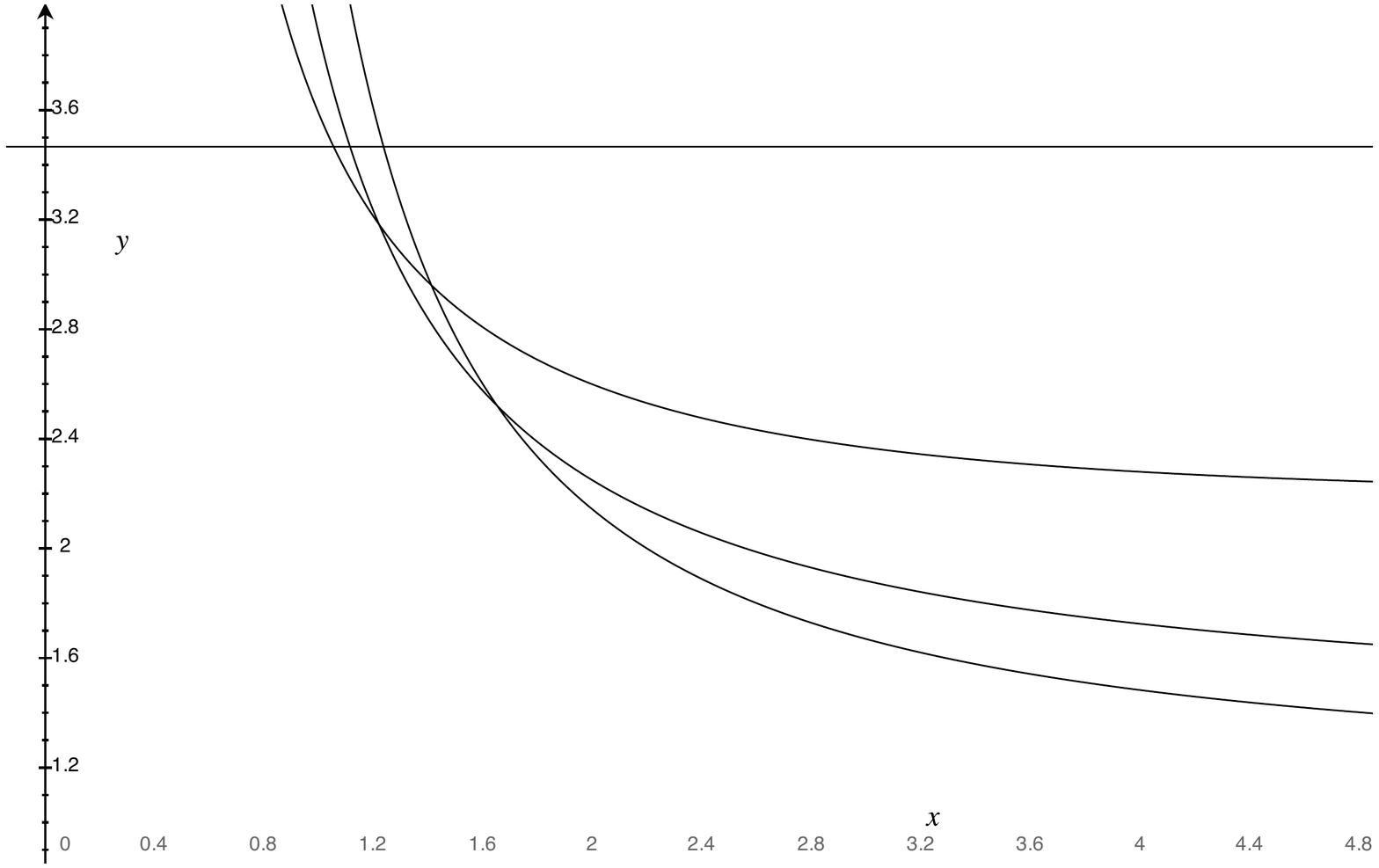}
		\caption{Measure functions of vectors in $\mathcal S(32/27)$}
		\label{fig:32over27clean}
	\end{figure}
\end{ex}

Following the definitions developed by the author in \cite{SamuelsParametrized}, we say that a positive real number $t$ is {\it standard for $\alpha$} if there exists
an open neighborhood $U$ of $t$ and $(y_1,y_2,\ldots,y_n)\in \real^N$ such that 
\begin{equation*}
	m_t(\alpha) = \left( |y_1|^t + |y_2|^t + \cdots + |y_n|^t\right)^{1/t}
\end{equation*}
for all $t\in U$.  If $t$ is not standard for $\alpha$ then we say that $t$ is {\it exceptional for $\alpha$}.  We may think of exceptional points as those points at which the infimum
attaining factorization of $\alpha$ is forced to change.  The work of \cite{SamuelsParametrized} provided an example of a rational number having two exceptional points\footnote{
This result of \cite{SamuelsParametrized} relied on a conjecture which was shown in \cite{JankSamuels} to be correct.}, however, no example having more than two exceptional
points was known.  By examining Figures \ref{fig:32over27full} and \ref{fig:32over27clean}, the example $32/27$ presented here clearly has three exceptional points and 
is the first such known example.

The method outlined in Example \ref{32over27} may be applied to other cases in order to produce examples of rational numbers having more than three exceptional points.
To obtain an example with four exceptional points, we may make a simple adjustment to Example \ref{32over27}.

\begin{ex} \label{256over243}
	Consider $\alpha = 256/243 = 2^8/3^5$.  As we are using the same primes as in Example \ref{32over27}, we obtain the same list of upper and lower best approximations
	to $\xi = \log 3/\log 2$
	\begin{equation*}
		\{(1,0),(1,1),(2,1),(3,2),(5,3),(8,5),(11,7),(19,12),\ldots\},
	\end{equation*}
	and therefore, the characteristic transformation of $\alpha$ is given by
	\begin{equation*}
		T_\alpha = \left( \begin{array}{cccccc} 1 & 1 & 2 & 3 & 5 & 8 \\ 0 & 1 & 1 & 2 & 3 & 5 \end{array} \right).
	\end{equation*}
	Once again we solve the system $T_\alpha(\xx) = (8,5)^T$ for $\xx$ having non-negative integer entries to obtain
	\begin{equation*}
		\mathcal V(\alpha) = \left\{ \begin{pmatrix} 0 \\ 0 \\ 0\\ 0\\ 0\\ 1\end{pmatrix},
							\begin{pmatrix} 0 \\ 0 \\ 0 \\ 1 \\ 1 \\0 \end{pmatrix},
							\begin{pmatrix} 0 \\ 1 \\ 1 \\ 0 \\ 1\\ 0 \end{pmatrix},
							\begin{pmatrix} 0 \\ 0 \\ 1\\ 2 \\ 0 \\ 0 \end{pmatrix},
							\begin{pmatrix} 1 \\ 2\\ 0 \\ 0 \\ 1 \\ 0  \end{pmatrix},
							\begin{pmatrix} 0 \\ 1 \\ 2 \\ 1 \\ 0 \\ 0 \end{pmatrix},
							\begin{pmatrix} 1 \\ 1 \\ 0 \\ 2 \\ 0 \\ 0 \end{pmatrix},
							\begin{pmatrix} 1 \\ 2 \\ 1 \\ 1 \\ 0 \\ 0 \end{pmatrix},
							\begin{pmatrix} 0 \\ 2 \\ 3 \\ 0 \\ 0 \\ 0 \end{pmatrix},
							\begin{pmatrix} 2 \\ 3 \\ 0 \\ 1 \\ 0 \\ 0 \end{pmatrix},
							\begin{pmatrix} 1 \\ 3 \\ 2 \\ 0 \\ 0 \\ 0 \end{pmatrix},
							\begin{pmatrix} 2 \\ 4 \\ 1 \\ 0 \\ 0 \\ 0 \end{pmatrix},
							\begin{pmatrix} 3 \\ 5 \\ 0 \\ 0 \\ 0 \\ 0 \end{pmatrix} \right\}.
	\end{equation*}
	Setting $\xx = (x_1,x_2,x_3,x_4,x_5,x_6)^T$ and applying \eqref{FactorMeasure} we deduce that
	\begin{equation*}
		f_\xx(t) = \left( x_1m\left(\frac{2^1}{3^0}\right)^t + x_2m\left(\frac{2^1}{3^1}\right)^t + x_3m\left(\frac{2^2}{3^1}\right)^t + x_4m\left(\frac{2^3}{3^2}\right)^t
			+ x_5m\left(\frac{2^5}{3^3}\right)^t + x_6m\left(\frac{2^8}{3^5}\right)^t\right)^{1/t},
	\end{equation*}
	and hence,
	\begin{equation*}
		f_\xx(t) = \left( x_1(\log 2)^t + x_2(\log 3)^t + x_3(\log 4)^t + x_4(\log 9)^t + x_5(\log 32)^t + x_6(\log 256)^t\right)^{1/t}
	\end{equation*}
	As in our previous example, we plot the graphs of the measure functions $f_\xx(t)$ for each $\xx\in \mathcal V(\alpha)$.  These graphs may be found in Figure \ref{fig:256over243full}.
	\begin{figure}[p]
		\centering
		\includegraphics[height=8cm,width=13cm]{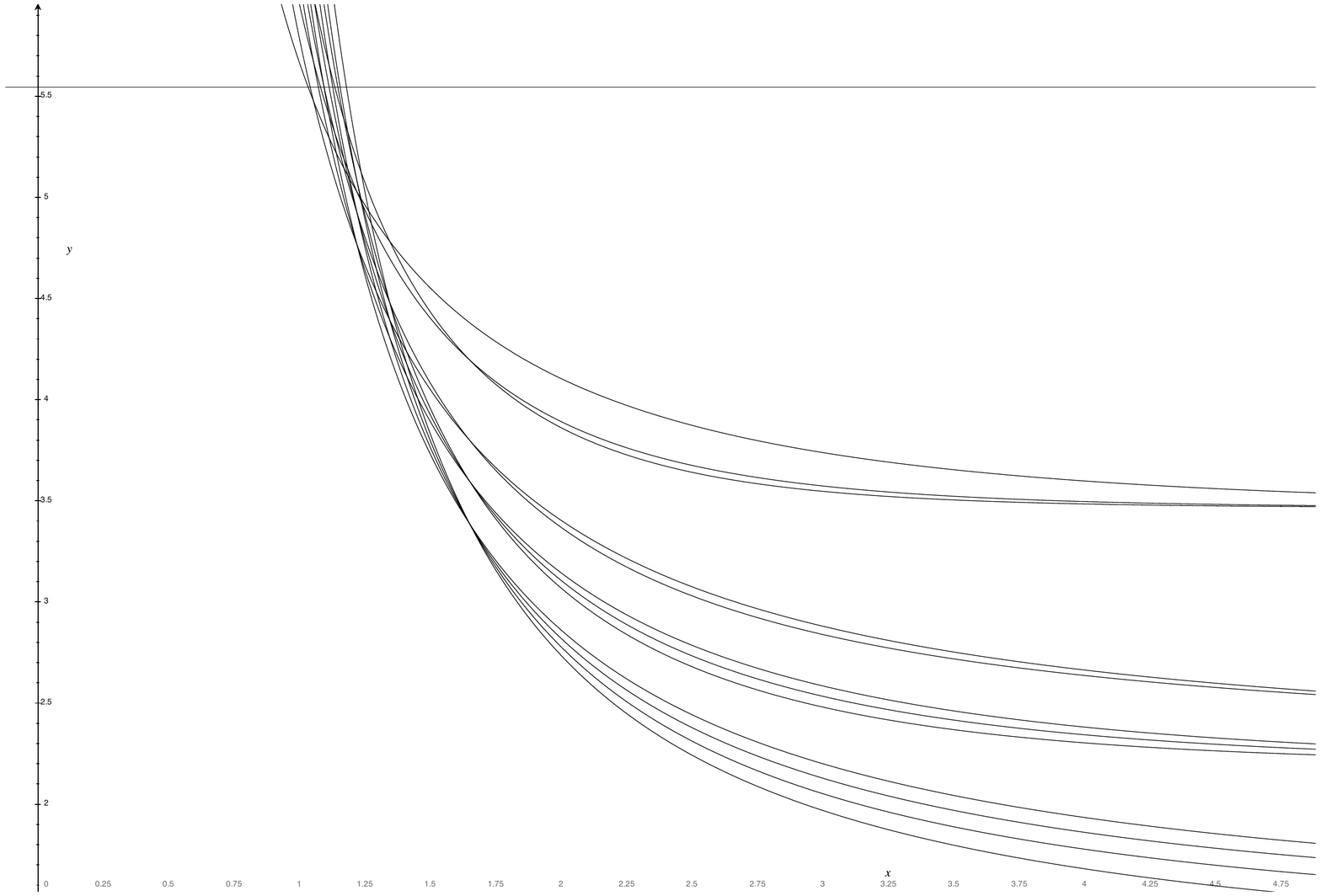}
		\caption{Measure functions of vectors in $\mathcal V(256/243)$}
		\label{fig:256over243full}
	\end{figure}
	In this case, we determine that the set
	\begin{equation*}
		\mathcal S(\alpha) = \left\{ \begin{pmatrix} 0 \\ 0 \\ 0\\ 0\\ 0\\ 1\end{pmatrix},
							\begin{pmatrix} 0 \\ 0 \\ 0 \\ 1 \\ 1 \\0 \end{pmatrix},
							\begin{pmatrix} 0 \\ 0 \\ 1\\ 2 \\ 0 \\ 0 \end{pmatrix},
							\begin{pmatrix} 0 \\ 2 \\ 3 \\ 0 \\ 0 \\ 0 \end{pmatrix},
							\begin{pmatrix} 3 \\ 5 \\ 0 \\ 0 \\ 0 \\ 0 \end{pmatrix} \right\}
	\end{equation*}
	is also an infimum set for $\alpha$ and the graphs of the these measure functions are given in Figure \ref{fig:256over243clean}.
	\begin{figure}[p]
		\centering
		\includegraphics[height=8cm,width=13cm]{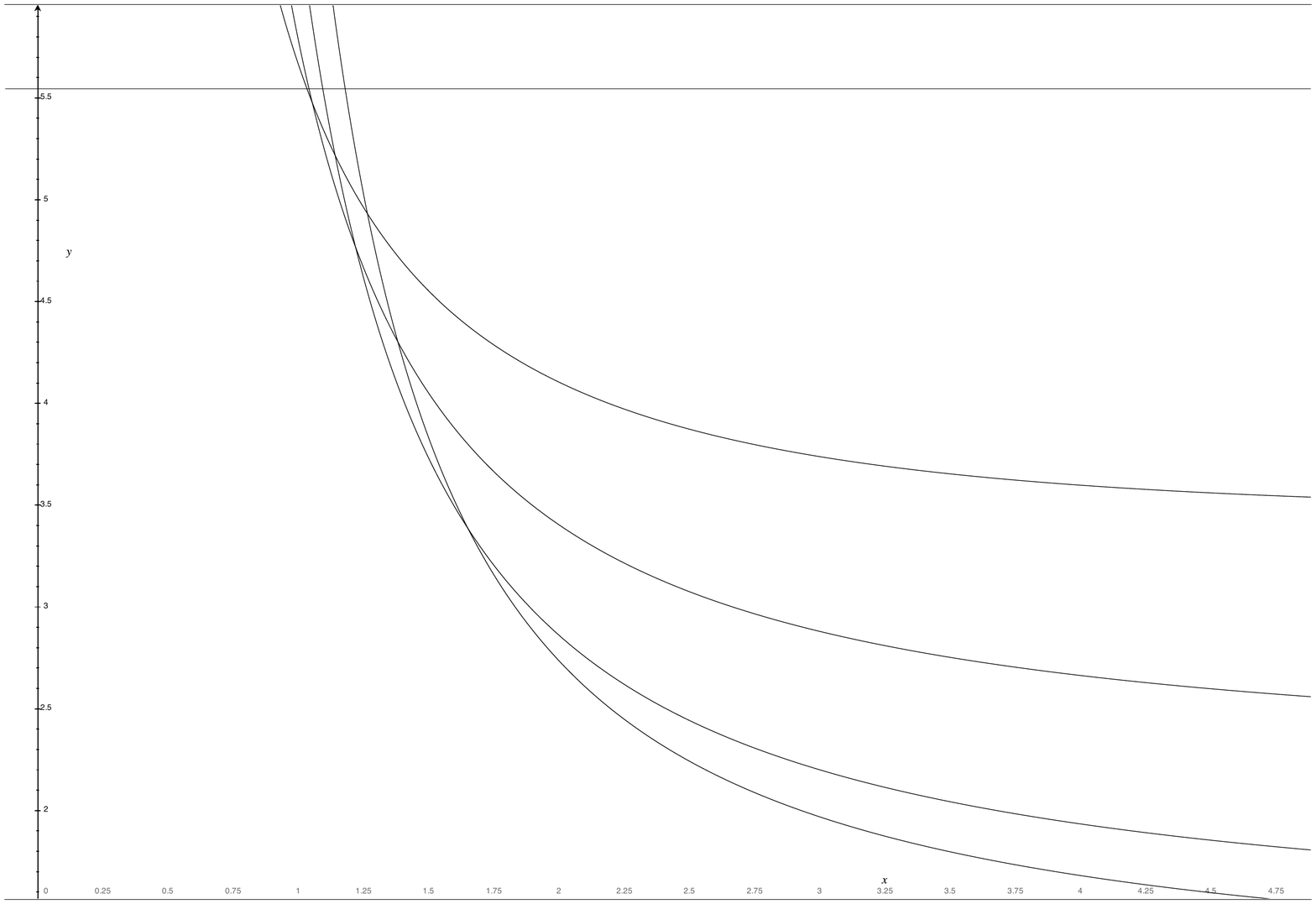}
		\caption{Measure functions of vectors in $\mathcal S(256/243)$}
		\label{fig:256over243clean}
	\end{figure}
	In this example, we observe that $\alpha$ has exactly four exceptional points.
\end{ex}

\subsection{Golden Ratio Approximations}

The collection of upper and lower best approximations to $\xi$ are particularly simple to identify when $\xi$ is close to $\phi = (1+\sqrt 5)/2$.  To see this, we let $\{h_n\}_{n=0}^\infty$
be the Fibonacci sequence, i.e., $h_0 = 0$, $h_1 = 1$ and $h_n = h_{n-1} + h_{n-2}$ for all $n\geq 2$.  It is well-known that 
\begin{equation*}
	\frac{h_2}{h_1} < \frac{h_4}{h_3} < \frac{h_6}{h_5} < \cdots < \frac{1 + \sqrt 5}{2} < \cdots < \frac{h_5}{h_4} < \frac{h_3}{h_2} < \frac{h_1}{h_0}
\end{equation*}
and that $\{h_n/h_{n-1}\}_{n=1}^\infty$ are precisely the list of convergents to $\phi$.
We now note the following important observation.

\begin{thm} \label{GRApprox}
	Suppose $n$ is a positive even integer. 
	\begin{enumerate}[(i)]
		\item\label{GoldenSqueeze}  There exist primes $p$ and $q$ such that
			\begin{equation*}
				\frac{h_n}{h_{n-1}} < \frac{\log q}{\log p} < \frac{h_{n+1}}{h_n}.
			\end{equation*}
		\item\label{CharTran} Suppose $p$ and $q$ are primes satisfying \eqref{GoldenSqueeze}.  If $\alpha = p^{h_{n+1}}/q^{h_{n}}$ 
			then the characteristic transformation of $\alpha$ is given by
			\begin{equation*}
				T_\alpha = \left( \begin{array}{cccc} h_1 & h_2 & \cdots & h_{n+1} \\ h_0 & h_1 & \cdots & h_{n} \end{array} \right).
			\end{equation*}
	\end{enumerate}
\end{thm}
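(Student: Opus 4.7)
The plan is to treat the two parts of Theorem \ref{GRApprox} separately. For part \eqref{GoldenSqueeze}, I would invoke the density of $\{\log q/\log p : p, q \text{ primes}\}$ in the positive reals. Because $n$ is even, the Cassini-type identity $h_{n+1} h_{n-1} - h_n^2 = 1$ yields
\begin{equation*}
  \frac{h_{n+1}}{h_n} - \frac{h_n}{h_{n-1}} = \frac{1}{h_n h_{n-1}} > 0,
\end{equation*}
so the interval in question is non-empty. Choosing a prime $p$ large enough that $p^{1/(h_n h_{n-1})} \geq 2$, Bertrand's postulate supplies a prime $q$ in $(p^{h_n/h_{n-1}}, p^{h_{n+1}/h_n})$, and dividing $\log q$ by $\log p$ gives the required inequality.

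For part \eqref{CharTran}, the crux is to identify the upper and lower best approximations $(a, b)$ of $\xi = \log q/\log p$ satisfying $a \leq h_{n+1}$ and $b \leq h_n$. I would first observe that the hypothesis places $\xi$ strictly between two consecutive convergents of $\phi = [1; 1, 1, \ldots]$, and a straightforward induction using the Gauss map shows that the continued fraction of $\xi$ then begins as $\xi = [1; 1, 1, \ldots, 1, x_{n-1}, x_n, \ldots]$ with $x_0 = x_1 = \cdots = x_{n-2} = 1$, while $x_{n-1} \geq 1$ is otherwise arbitrary.

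Next I would apply Theorem \ref{ApproxFrac}. Apart from the two exceptional pairs $(1, 0) = (h_1, h_0)$ and $(x_0, 1) = (1, 1) = (h_2, h_1)$, every upper or lower best approximation $(a, b)$ has the form $a/b = [x_0; x_1, \ldots, x_{k-1}, x]$ for some $k \in \nat$ and $1 \leq x \leq x_k$. For each $k \in \{1, \ldots, n-2\}$ the equality $x_k = 1$ forces $x = 1$, producing the convergent $(h_{k+2}, h_{k+1})$. For $k = n-1$ the choice $x = 1$ yields $(h_{n+1}, h_n)$, while any $x \geq 2$ gives a denominator $x h_{n-1} + h_{n-2} \geq h_{n+1} > h_n$, and every $k \geq n$ leads to a denominator that already exceeds $h_n$. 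Hence the complete list of upper or lower best approximations with $b \leq h_n$ is $\{(h_k, h_{k-1}) : 1 \leq k \leq n+1\}$, and the Fibonacci identity $\gcd(h_k, h_{k-1}) = 1$ places each of these in $\GG$.

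To finish I would sort this $(n+1)$-element list by ascending $b_k = h_{k-1}$, with the single tie $b_2 = b_3 = 1$ broken by ascending $a_k = h_k$ (so that $(h_2, h_1)$ precedes $(h_3, h_2)$). This arranges the columns in the natural order $(h_1, h_0), (h_2, h_1), \ldots, (h_{n+1}, h_n)$, yielding precisely the matrix in the statement. I expect the subtlest step to be the inductive claim that $\xi \in (h_n/h_{n-1}, h_{n+1}/h_n)$ determines the first $n-1$ partial quotients of $\xi$; the rest of the argument is careful bookkeeping with Fibonacci indices and Theorem \ref{ApproxFrac}, where off-by-one mistakes are the primary hazard.
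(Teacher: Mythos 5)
Your proposal is correct, and its overall skeleton matches the paper's: part \eqref{GoldenSqueeze} is proved exactly as in the paper (choose $p$ with $p^{\varepsilon}\geq 2$ where $\varepsilon = h_{n+1}/h_n - h_n/h_{n-1}$, then apply Bertrand/Chebyshev to find $q$), with your Cassini-identity computation of $\varepsilon = 1/(h_nh_{n-1})$ being a harmless explicit extra. For part \eqref{CharTran}, both arguments reduce to (a) pinning down the initial partial quotients of $\xi$ and (b) enumerating the admissible pairs via Theorem \ref{ApproxFrac}, but you reach (a) by a genuinely different route. The paper first shows $h_n/h_{n-1}$ is a lower best approximation to $\xi$ by exploiting that it is a best approximation to $\phi$ and that $\xi$ and $\phi$ sit in the same gap, and then runs Theorem \ref{ApproxFrac} in reverse, doing a case analysis on the two finite continued fraction expansions of $h_n/h_{n-1}$ to rule out one possible shape for the expansion of $\xi$. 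You instead invoke directly the standard fact that the irrationals whose continued fraction begins with a prescribed block form the open interval cut out by the corresponding consecutive convergents, so that $\xi\in(h_n/h_{n-1},h_{n+1}/h_n)$ immediately forces $x_0=\cdots=x_{n-2}=1$. Your route is shorter and avoids the paper's case analysis (and its separate treatment of $n=2$), at the cost of relying on that interval characterization, which you only sketch; your subsequent bookkeeping with Theorem \ref{ApproxFrac}, including the bound $xh_{n-1}+h_{n-2}\geq h_{n+1}>h_n$ for $x\geq 2$ and the tie-breaking $(h_2,h_1)$ before $(h_3,h_2)$, is more explicit than the paper's terse ``the result follows by Theorem \ref{ApproxFrac}'' and is correct as written.
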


In the previous subsection we studied an example satisfying Theorem \ref{GRApprox}\eqref{GoldenSqueeze} (Example \ref{32over27}).  
Indeed, taking $n=4$ we find that $p = 2$ and $q = 3$ satisfy the required property that
\begin{equation*}
	\frac{3}{2} < \frac{\log 3}{\log 2} < \frac{5}{3}.
\end{equation*}
As a result, we obtained the characteristic transformation
\begin{equation*}
	T_\alpha = \left( \begin{array}{ccccc} 1 & 1 & 2 & 3 & 5 \\ 0 & 1 & 1 & 2 & 3 \end{array} \right).
\end{equation*}
Moreover, we note that $h_n/h_{n-1} > \log 3/\log 2$ for all $n> 4$.  Consequently, $4$ is the largest value of $n$ for which the pair $(p,q) = (2,3)$ satisfies
Theorem \ref{GRApprox}\eqref{GoldenSqueeze}.  We now consider an example with $n=8$.

\begin{ex} \label{LargeExample}
	Set $n=8$ so that $h_n/h_{n-1} = 21/13$ and $h_{n+1}/h_n = 34/21$.  Note that $p=31$ and $q=257$ are primes which satisfy the required inequalities
	\begin{equation*}
		\frac{21}{13} < \frac{\log 257}{\log 31} < \frac{34}{21}.
	\end{equation*} 
	If we set $\alpha = 31^{34}/257^{21}$ then Theorem \ref{GRApprox}\eqref{CharTran} asserts that $\alpha$ has characteristic transformation given by
	\begin{equation*}
		T_\alpha = \left( \begin{array}{ccccccccc} 1 & 1 & 2 & 3 & 5 & 8 & 13 & 21 & 34 \\ 0 & 1 & 1 & 2 & 3 & 5 & 8 & 13 & 21 \end{array} \right).
	\end{equation*}
	After determining the set $\mathcal V(\alpha)$ we are able to find that
	\begin{equation*}
		\mathcal S(\alpha) =  \left\{ \begin{pmatrix} 0 \\ 0 \\ 0 \\ 0 \\ 0 \\ 0\\ 0\\ 0\\ 1\end{pmatrix},
							\begin{pmatrix} 0 \\ 0 \\ 0 \\ 0 \\ 0 \\ 0 \\ 1 \\ 1 \\0 \end{pmatrix},
							\begin{pmatrix} 0 \\ 0 \\ 0 \\ 0 \\ 0 \\ 1\\ 2 \\ 0 \\ 0  \end{pmatrix},
							\begin{pmatrix} 0 \\ 0 \\ 0 \\ 0 \\ 2\\ 3 \\ 0 \\ 0 \\ 0  \end{pmatrix},
							\begin{pmatrix} 0 \\ 0 \\ 0 \\ 3 \\ 5 \\ 0 \\ 0 \\ 0 \\ 0 \end{pmatrix},
							 \begin{pmatrix} 0 \\ 0 \\ 5 \\ 8 \\ 0 \\ 0 \\ 0 \\ 0 \\ 0 \end{pmatrix},
							 \begin{pmatrix} 0 \\ 8 \\ 13 \\ 0 \\ 0 \\ 0 \\ 0 \\ 0 \\ 0 \end{pmatrix},
							  \begin{pmatrix} 13 \\ 21 \\  0 \\ 0 \\ 0 \\ 0 \\ 0 \\ 0 \\ 0 \end{pmatrix} \right\}
	\end{equation*}
	is an infimum set for $\alpha$.  In this example, $\alpha$ has seven exceptional points.
\end{ex}

Examples \ref{32over27} and \ref{LargeExample} suggest a conjecture regarding primes satisfying Theorem \ref{GRApprox}\eqref{GoldenSqueeze}.

\begin{conj} \label{GRConjecture}
	If $p$ and $q$ are primes satisfying
	\begin{equation*}
		\frac{h_n}{h_{n-1}} < \frac{\log q}{\log p} < \frac{h_{n+1}}{h_n}
	\end{equation*}
	and $\alpha = p^{h_{n+1}}/q^{h_n}$ then 
	\begin{equation*}
		 \left\{ \begin{pmatrix} 0 \\ 0 \\ 0 \\ 0 \\ \vdots \\ 0 \\ 0\\ h_0\\ h_1 \end{pmatrix},
							\begin{pmatrix} 0 \\ 0 \\ 0 \\ 0 \\ \vdots \\ 0 \\ h_1 \\ h_2 \\0 \end{pmatrix},
							\cdots,
							\begin{pmatrix} 0 \\ h_{n-2} \\  h_{n-1} \\ 0 \\ \vdots \\ 0 \\ 0 \\ 0 \\ 0 \end{pmatrix},
							\begin{pmatrix} h_{n-1} \\ h_{n} \\  0 \\ 0 \\ \vdots \\ 0 \\ 0 \\ 0 \\ 0 \end{pmatrix}\right\}
	\end{equation*}
	is an infimum set for $\alpha$.  Moreover, $\alpha$ has $n-1$ exceptional points.
\end{conj}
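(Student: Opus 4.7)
The plan is to verify that the listed vectors are factorization vectors, compute their measure functions, analyze the pairwise transitions between them, and address the main obstacle of ruling out all other vectors in $\mathcal V(\alpha)$. Throughout, let $V_k$ denote the $k$-th vector in the conjectured set, which carries entry $h_{k-1}$ at position $n-k+1$ and entry $h_k$ at position $n-k+2$ (all other entries zero). First I would verify $V_k \in \mathcal V(\alpha)$ by checking $T_\alpha V_k = (h_{n+1}, h_n)^T$; using the characteristic transformation in Theorem \ref{GRApprox}\eqref{CharTran}, this reduces to the two convolution identities $h_{k-1} h_{n-k+1} + h_k h_{n-k+2} = h_{n+1}$ and $h_{k-1} h_{n-k} + h_k h_{n-k+1} = h_n$, both standard consequences of $h_m h_{\ell+1} + h_{m-1} h_\ell = h_{m+\ell}$. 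Next I would compute $f_{V_k}(t)$ explicitly: because $\xi$ lies strictly between $h_n/h_{n-1}$ and $h_{n+1}/h_n$ with $n$ even, and the convergents of $\phi$ alternate around it, for every $j \leq n+1$ the ratio $h_j/h_{j-1}$ exceeds $\xi$ when $j$ is odd and is less than $\xi$ when $j$ is even. Consequently $m(p^{h_j}/q^{h_{j-1}})$ equals $h_j \log p$ for odd $j$ and $h_{j-1} \log q$ for even $j$, and applying \eqref{FactorMeasure} gives a closed form for each $f_{V_k}(t)$; in particular $f_{V_1}(t) \equiv h_{n+1} \log p$ and $f_{V_n}(t) = (h_{n-1} (\log p)^t + h_n (\log q)^t)^{1/t}$.

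The next step is to solve each pairwise equation $f_{V_k}(t) = f_{V_{k+1}}(t)$. The goal is to show that it has a unique solution $t_k > 1$ and that $t_1 < t_2 < \cdots < t_{n-1}$. Combined with the fact that $V_1$ attains $m_t(\alpha)$ on $(0, 1]$ (the trivial factorization, as remarked after Theorem \ref{Main}), this would place $V_k$ as the unique minimizer within $\{V_1, \ldots, V_n\}$ on $(t_{k-1}, t_k)$ (with $t_0 = 1$ and $t_n = \infty$), producing the $n-1$ claimed exceptional points. The explicit formulas from the first step, together with Fibonacci identities, should make this analysis tractable, though proving the monotonicity of the crossover sequence $\{t_k\}$ in full generality will likely require care.

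The main obstacle is showing that no other $\xx \in \mathcal V(\alpha)$ ever attains $m_t(\alpha)$, so that $\{V_1, \ldots, V_n\}$ is already an infimum set. The natural framework is linear programming: for fixed $t > 0$, $\xx \mapsto \sum x_n m_n^t$ is linear in $\xx$, so its minimum over the polytope $P = \{\xx \geq 0 : T_\alpha \xx = (h_{n+1}, h_n)^T\}$ is attained at an extreme point supported on at most two columns; since $y \mapsto y^{1/t}$ is monotone, the same point minimizes $f_\xx(t)$. Applying Cramer's rule together with the identity $h_{j_1} h_{j_2-1} - h_{j_1-1} h_{j_2} = (-1)^{j_1-1} h_{j_2-j_1}$ shows that the integer extreme points of $P$ are precisely the $V_k$'s, corresponding to consecutive column pairs $(j,j+1)$, while pairs of nonadjacent columns typically yield non-integer extreme points. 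I expect the hardest step to be proving that for every $t > 1$ the LP-optimal extreme point of $P$ is an integer vertex rather than a non-integer one; once established, this would reduce the integer minimum $\min_{\xx \in \mathcal V(\alpha)} f_\xx(t)$ to $\min_k f_{V_k}(t)$ and complete the proof. The required comparison, that adjacent column pairs dominate nonadjacent ones for all $t > 1$, is delicate and is expected to exploit the golden-ratio hypothesis on $\xi$ in an essential way; this Fibonacci-specific domination is the primary difficulty of the conjecture.
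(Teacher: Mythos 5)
First, note that the statement you are proving is stated in the paper as Conjecture \ref{GRConjecture}: the paper offers no proof, and the author explicitly flags the underlying difficulty as open (see the discussion after Theorem \ref{ConvexHulls}, where even for $\alpha = 32/27$ the vertex criterion fails to explain why $(1,1,0,1,0)^T$ is absent from the minimal infimum set). So there is no paper proof to compare against; your proposal must stand on its own, and as written it does not close the conjecture.

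Your setup is sound: the verification that each $V_k$ lies in $\mathcal V(\alpha)$ via the convolution identity $h_{k-1}h_{n-k+1}+h_kh_{n-k+2}=h_{n+1}$ is correct, the computation of $m(p^{h_j}/q^{h_{j-1}})$ from the alternation of the convergents of $\phi$ around $\xi$ is correct, and the LP framing is coherent: if for every $t>1$ the linear objective $\xx\mapsto \sum_n x_n\, m(p^{a_n}/q^{b_n})^t$ attains its minimum over the polytope $P=\{\xx\geq 0: T_\alpha\xx=(h_{n+1},h_n)^T\}$ at an \emph{integral} vertex, then that vertex also minimizes over $\mathcal V(\alpha)\subseteq P$, and Cramer's rule with $h_{j_1}h_{j_2-1}-h_{j_1-1}h_{j_2}=\pm h_{j_2-j_1}$ does show the integral vertices of $P$ are exactly the adjacent-column solutions $V_k$. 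But the two decisive steps are asserted, not proven. (a) You never establish that the LP optimum is integral for all $t>1$, i.e.\ that adjacent column pairs dominate non-adjacent ones; you yourself call this ``the primary difficulty of the conjecture,'' and without it the minimum over $\mathcal V(\alpha)$ could be attained at an integer point of $P$ supported on three or more columns (precisely the situation of $(1,1,0,1,0)^T$ in Example \ref{32over27}, which is a vertex of $\convex(\mathcal V(\alpha))$ and must be beaten by some $V_k$ at \emph{every} $t>1$ --- a comparison your plan does not carry out). (b) The claim that the crossover equations $f_{V_k}(t)=f_{V_{k+1}}(t)$ have unique solutions satisfying $t_1<t_2<\cdots<t_{n-1}$ is also left unproven, and it is needed both to conclude that each $V_k$ actually wins on a nonempty interval and to get the count of exactly $n-1$ exceptional points (an exceptional point requires the minimizing measure function to genuinely change, which fails if some $V_k$ is never the strict minimizer). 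A minor additional slip: the first interval of optimality for $V_1$ is $(0,t_1)$, not $(1,t_1)$; the trivial factorization wins for all $t\leq 1$ and continues to win until the first crossover, so $t=1$ is not itself exceptional. In short, this is a plausible programme that correctly isolates where the difficulty lies, but the Fibonacci-specific domination argument at its core is missing, and that is exactly the part that would resolve the conjecture.
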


Conjecture \ref{GRConjecture} is of particular interest in view of the author's work \cite{SamuelsMetric}.  Indeed, Corollary 2.3 of \cite{SamuelsMetric} asserts that every 
algebraic number has only finitely many exceptional points.  However, it is currently unknown whether there is a uniform upper bound on the number of exceptional points 
for $\alpha\in \alg$.  If Conjecture \ref{GRConjecture} holds then this question is resolved in the negative, i.e., no such uniform upper bound exists.

\subsection{The size of $\mathcal V(\alpha)$}

In the introduction, we noted that our set $\mathcal V(\alpha)$ (or equivalently $\mathcal B(\alpha)$) is smaller than the infimum set $\mathcal F(\alpha)$ 
of all factorizations of $\alpha$ described by Theorem \ref{RationalAttain}.  In the case where $\alpha$ arises from Theorem \ref{GRApprox}, we exhibit this discrepancy
by providing specific estimates on the sizes of these sets.

We first provide a general estimate on the size of $\mathcal F(\alpha)$ in the case where $\alpha = p^x/q^y$ for some $x,y\in \nat_0$.  
Let $j(x)$ and $j(y)$ denote the number of partitions of $x$ and $y$, respectively, so it follows that $\#\mathcal F(\alpha) \geq j(x)j(y)$.  
A classical result of Hardy and Ramanujan \cite{HardyRamanujan} provides an asymptotic formula for $j(x)$ which we may apply here to obtain
$\log (\#\mathcal F(\alpha)) \gg  \sqrt{x} + \sqrt{y}$.  Letting $\Omega(\alpha)$ denote the total number of not necessarily distinct prime factors of $\alpha$, we deduce that
\begin{equation} \label{FactorizationBound}
	\log (\#\mathcal F(\alpha)) \gg  \Omega(\alpha)^{1/2}.
\end{equation}
If $\alpha$ is chosen to satisfy the hypotheses of Theorem \ref{GRApprox}\eqref{CharTran} then we are able to provide an upper bound on $\mathcal V(\alpha)$ which is
significantly smaller than the right hand side of \eqref{FactorizationBound}.

\begin{thm} \label{GoldenBound}
	For each even natural number $n$, let $p_n$ and $q_n$ be primes such that
	\begin{equation*}
		\frac{h_n}{h_{n-1}} < \frac{\log q_n}{\log p_n} < \frac{h_{n+1}}{h_n}
	\end{equation*}
	and set $\alpha_n = p_n^{h_{n+1}}/q_n^{h_n}$.  Then $\log(\#\mathcal V(\alpha_n)) \ll (\log h_{n+1})^2$.
\end{thm}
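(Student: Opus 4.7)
The plan is to exploit the explicit Fibonacci form of the characteristic transformation provided by Theorem \ref{GRApprox}\eqref{CharTran} and convert the estimate into a counting problem for non-negative integer solutions of a $2 \times (n+1)$ linear system. The geometric growth of the Fibonacci sequence will force every coordinate of a factorization vector to be small, and a crude coordinate-by-coordinate product bound will suffice.

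First I would record that, by Theorem \ref{GRApprox}\eqref{CharTran}, a vector $\xx = (x_1,\ldots,x_{n+1})^T \in \nat_0^{n+1}$ lies in $\mathcal V(\alpha_n)$ if and only if
\begin{equation*}
\sum_{k=1}^{n+1} x_k h_k = h_{n+1} \qquad \text{and} \qquad \sum_{k=1}^{n+1} x_k h_{k-1} = h_n.
\end{equation*}
The first equation alone forces $x_k \leq h_{n+1}/h_k$ for every $1 \leq k \leq n+1$, and hence
\begin{equation*}
\#\mathcal V(\alpha_n) \;\leq\; \prod_{k=1}^{n+1}\left( \left\lfloor \frac{h_{n+1}}{h_k}\right\rfloor + 1\right).
\end{equation*}

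Next I would invoke Binet's formula $h_k = (\phi^k - \psi^k)/\sqrt 5$ with $\phi=(1+\sqrt5)/2$ and $\psi = (1-\sqrt5)/2$ to obtain an absolute constant $c > 0$ with $h_{n+1}/h_k \leq c\,\phi^{n+1-k}$ for all $1 \leq k \leq n+1$. Taking logarithms then gives
\begin{equation*}
\log \#\mathcal V(\alpha_n) \;\leq\; \sum_{k=1}^{n+1}\log\!\left(\frac{h_{n+1}}{h_k} + 1\right) \;\ll\; \sum_{k=1}^{n+1}(n+1-k)\log\phi \;\ll\; n^2.
\end{equation*}
Since Binet's formula also yields $\log h_{n+1} \sim (n+1)\log\phi$, we conclude $n^2 \ll (\log h_{n+1})^2$, which is exactly the claimed bound.

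There is no serious obstacle here: once the set $\mathcal V(\alpha_n)$ is interpreted via Theorem \ref{GRApprox}, the estimate is essentially forced by the geometric growth of the Fibonacci denominators. The only point requiring mild care is the large coordinate $x_1 \leq h_{n+1}$, which individually contributes $\log h_{n+1} \asymp n$ to the sum; this is comfortably absorbed into the $\log \phi \cdot \binom{n+1}{2}$ contribution from the remaining coordinates, so the quadratic-in-$n$ bound survives.
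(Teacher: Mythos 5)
Your argument is correct and follows essentially the same route as the paper: both reduce to counting non-negative integer solutions of the Fibonacci system $T_{\alpha_n}\xx=(h_{n+1},h_n)^T$, bound each coordinate via the first equation, and use the exponential growth of $h_{n+1}$ to trade $n$ for $\log h_{n+1}$. The only difference is cosmetic — the paper uses the uniform bound $x_i\le h_{n+1}$ to get $\#\mathcal V(\alpha_n)\le(h_{n+1}+1)^{n+1}$, whereas you use the slightly sharper per-coordinate bound $x_k\le h_{n+1}/h_k$; both yield $\ll(\log h_{n+1})^2$.
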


For purposes of comparison with \eqref{FactorizationBound}, Theorem \ref{GoldenBound} gives
\begin{equation} \label{VBound}
	\log(\#\mathcal V(\alpha_n)) \ll (\log \Omega(\alpha_n))^2
\end{equation}
verifying that the infimum set $\mathcal V(\alpha_n)$ introduced in this article is considerably smaller than the previously best known infimum set $\mathcal F(\alpha_n)$.
We also recall that Conjecture \ref{GRConjecture} proposes the existence of an infimum set $\mathcal S(\alpha)$ having exactly $n$ elements
and as a result, we know that $\log (\#\mathcal S(\alpha_n)) \ll \log \log \Omega(\alpha_n)$.  Therefore, if Conjecture \ref{GRConjecture} holds, then we obtain an infimum set
satisfying an even stronger condition than \eqref{VBound}.

\subsection{Minimal Infimum Sets}

We recall from Corollary \ref{MainCor} and Theorem \ref{VectorConversion} that the set $\mathcal V(\alpha)$ always forms an infimum set for $\alpha$.
However, as is clear from Examples \ref{32over27}, \ref{256over243} and \ref{LargeExample}, there is often a considerably smaller set which is also an infimum set.
These observations motivate the following definitions.  

An infimum set $\mathcal X(\alpha)$ is called {\it minimal} if there does not exist a proper subset of $\mathcal X(\alpha)$ which is also an infimum set for $\alpha$.  
Using the work of the author \cite{SamuelsMetric}, it is possible to show that every algebraic number has a minimal infimum set.  Moreover, if $\mathcal X$ and $\mathcal Y$
are minimal infimum sets for $\alpha$ then there exists a bijection $\sigma:\mathcal X\to \mathcal Y$ such that $f_\xx = f_{\sigma(\xx)}$.  In particular, $\#\mathcal X = \#\mathcal Y$,
and we call this value the {\it minimality index of $\alpha$}.

It is straightforward to verify that the sets $\mathcal S(32/27)$ and $\mathcal S(256/243)$ given in Examples \ref{32over27} and \ref{256over243} 
are minimal infimum sets for $32/27$ and $256/243$, respectively.  However, there is currently no known algorithm for determining a minimal infimum set for $\alpha$ in general.
By studying the convex hull of $\mathcal V(\alpha)$, we are able to provide a partial result in this direction.  

If $S\subseteq \real^n$ is finite then the {\it convex hull of $S$} is defined to be
\begin{equation*}
	\convex(S) = \left\{ \sum_{\xx\in S} c_\xx\cdot \xx: c_\xx\geq 0\mbox{ and } \sum_{\xx\in S} c_\xx = 1\right\}.
\end{equation*}
Note that if $S$ contains exactly two points, then $\convex(S)$ is simply the line segment connecting those points.  If $\xx\in \convex(S)$ but $\xx\not \in \convex(S\setminus \{\xx\})$
then $\xx$ is called a {\it vertex} of $\convex(S)$.  We let $\vertex(S)$ denote the set of all vertices of $\convex(S)$ and note that $\vertex(S)\subseteq S$.

\begin{thm} \label{ConvexHulls}
	Suppose $p$ and $q$ are distinct primes and $\xi = \log q/\log p$.  Further assume that $(a,b)$ is an upper or lower best approximation for $\xi$ and $\alpha = p^a/q^b$.
	If $S\subseteq \mathcal V(\alpha)$ is an infimum set for $\alpha$ then $\vertex(S)$ is also an infimum set for $\alpha$.
\end{thm}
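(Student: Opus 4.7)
The plan is to exploit the fact that, although $f_\xx(t)$ itself is not linear in $\xx$, its $t$-th power is. Indeed, from \eqref{FactorMeasure}, if we write $c_n(t) = m(p^{a_n}/q^{b_n})^t$ and $\xx = (x_1,\ldots,x_N)^T$, then
\begin{equation*}
	f_\xx(t)^t = \sum_{n=1}^N c_n(t)\, x_n,
\end{equation*}
which is a linear function of $\xx$ for each fixed $t > 0$. This observation converts the geometric hypothesis on $\convex(S)$ into an inequality on measure functions.

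Next, I would take any $\xx \in S \setminus \vertex(S)$. Since $\convex(S) = \convex(\vertex(S))$, such an $\xx$ admits an expression
\begin{equation*}
	\xx = \sum_{\yy \in \vertex(S)} c_\yy \yy, \qquad c_\yy \geq 0,\ \sum_{\yy} c_\yy = 1.
\end{equation*}
Applying the linear functional $\xx \mapsto f_\xx(t)^t$ to both sides yields
\begin{equation*}
	f_\xx(t)^t = \sum_{\yy \in \vertex(S)} c_\yy\, f_\yy(t)^t \geq \min_{\yy \in \vertex(S)} f_\yy(t)^t,
\end{equation*}
since the right-hand side is a convex combination of the nonnegative values $f_\yy(t)^t$. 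Taking $t$-th roots, valid because $t > 0$, I obtain $f_\xx(t) \geq \min_{\yy \in \vertex(S)} f_\yy(t)$ for every $t > 0$.

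Finally, I combine this with the hypothesis that $S$ is an infimum set. For any $t > 0$,
\begin{equation*}
	m_t(\alpha) = \min_{\zz \in S} f_\zz(t) = \min\left\{ \min_{\yy \in \vertex(S)} f_\yy(t),\ \min_{\xx \in S\setminus \vertex(S)} f_\xx(t)\right\} = \min_{\yy \in \vertex(S)} f_\yy(t),
\end{equation*}
since the previous step shows each $f_\xx(t)$ with $\xx \notin \vertex(S)$ is bounded below by some $f_\yy(t)$ with $\yy \in \vertex(S)$. Because $\vertex(S) \subseteq S \subseteq \mathcal V(\alpha)$ is finite, this proves $\vertex(S)$ is an infimum set for $\alpha$.

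The argument is structurally short, and I do not anticipate a substantive obstacle; the only conceptual subtlety is recognizing that one must apply the convex combination to $f_\xx(t)^t$ rather than to $f_\xx(t)$ directly. Everything else is routine bookkeeping, and the result does not actually require the hypothesis that $(a,b)$ is an upper or lower best approximation — that assumption only enters through the fact that $\mathcal V(\alpha)$ is an infimum set in the first place (via Corollary \ref{MainCor} and Theorem \ref{VectorConversion}).
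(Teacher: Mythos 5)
Your proposal is correct and follows essentially the same route as the paper: both arguments rest on the observation that $\xx \mapsto f_\xx(t)^t$ is linear in $\xx$ for fixed $t$, so that a non-vertex point, being a convex combination of vertices, has $f_\xx(t)^t$ bounded below by the minimum of the corresponding vertex values. The paper merely writes out the double sum and interchanges the order of summation where you invoke linearity abstractly; the content is identical.
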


Examining our work from Example \ref{32over27}, we set $\alpha = 32/27$ and recall that 
\begin{equation*} 
		\mathcal V(\alpha) = \left\{ \begin{pmatrix} 0 \\ 0\\ 0\\ 0\\ 1\end{pmatrix},
							\begin{pmatrix} 0 \\ 0 \\ 1 \\ 1 \\0 \end{pmatrix},
							\begin{pmatrix} 1 \\ 1 \\ 0 \\ 1\\ 0 \end{pmatrix},
							\begin{pmatrix} 0 \\ 1\\ 2 \\ 0 \\ 0 \end{pmatrix},
							\begin{pmatrix} 1 \\ 2\\ 1 \\ 0 \\ 0 \end{pmatrix},
							\begin{pmatrix} 2 \\ 3 \\ 0 \\ 0 \\ 0 \end{pmatrix}  \right\}.
\end{equation*}
We observe that
\begin{equation*}
	\begin{pmatrix} 1 \\ 2\\ 1 \\ 0 \\ 0 \end{pmatrix} = \frac{1}{2}\begin{pmatrix} 0 \\ 1\\ 2 \\ 0 \\ 0 \end{pmatrix} + \frac{1}{2}\begin{pmatrix} 2 \\ 3 \\ 0 \\ 0 \\ 0 \end{pmatrix}  
\end{equation*}
proving that $(1,2,1,0,0)^T$ is not a vertex of $\mathcal V(\alpha)$, and hence, removing this vector from $\mathcal V(\alpha)$ still results in an infimum set for $\alpha$.  In fact, 
Theorem \ref{ConvexHulls} reveals that
\begin{equation*}
	\vertex(\mathcal V(\alpha)) =  \left\{ \begin{pmatrix} 0 \\ 0\\ 0\\ 0\\ 1\end{pmatrix},
							\begin{pmatrix} 0 \\ 0 \\ 1 \\ 1 \\0 \end{pmatrix},
							\begin{pmatrix} 1 \\ 1 \\ 0 \\ 1\\ 0 \end{pmatrix},
							\begin{pmatrix} 0 \\ 1\\ 2 \\ 0 \\ 0 \end{pmatrix},
							\begin{pmatrix} 2 \\ 3 \\ 0 \\ 0 \\ 0 \end{pmatrix}  \right\}
\end{equation*}
is an infimum set for $\alpha$.  Unfortunately, $(1,1,0,1,0)^T$ is a vertex of $\mathcal V(\alpha)$, so Theorem \ref{ConvexHulls} does not explain why this vector does not belong to the
minimal infimum set $\mathcal S(32/27)$ from Example \ref{32over27}.  It remains open to determine a minimal infimum set for any rational number $\alpha$ of form $\alpha = p^a/q^b$, where
$(a,b)$ is an upper or lower best approximation for $\log q / \log p$.

\section{Proofs} \label{Proofs}

 The proof of Proposition \ref{NDBasics} is sufficiently straightforward that we need not include it here.  Hence, we proceed with the proof of Proposition \ref{Relations}.

\begin{proof}[Proof of Theorem \ref{Relations}]
	To prove \eqref{Contains}, it suffices to prove that $\NN_1(\xi)\subseteq \NN_2(\xi)$ as the other set containment follows from Proposition \ref{NDBasics}.
	Suppose that $(a,b)\in \NN_1(\xi)$ but $(a,b)\not \in \NN_2(\xi)$.  Therefore, we have that
	\begin{equation*}
		\frac{a}{b} = \min\left\{ \frac{m}{n}: (m,n)\in \NN(\xi),\ m \leq a\right\} \quad\mbox{and}\quad \frac{a}{b} > \min\left\{ \frac{m}{n}: (m,n)\in \NN(\xi),\ n\leq b\right\},
	\end{equation*}
	where the inequality follows from the fact that $(a,b)\in \NN(\xi)$.  Therefore, there exists $(m,n)\in \NN(\xi)$ such that 
	\begin{equation} \label{BeatInequality}
		\frac{a}{b} > \frac{m}{n} > \xi\quad\mbox{and}\quad n\leq b.
	\end{equation}
	By our assumption that $(a,b) \in \NN_1(\xi)$, we must have that $m > a$.  We cannot have $n=0$ because then the inequalities \eqref{BeatInequality} would fail, so it follows that
	$m/n > a/n \geq a/b$, a contradiction.
	
	By Proposition \ref{NDBasics}, it is enough to prove \eqref{UpperEqual} in order to complete our proof.
	So we assume that $\xi > 1$ and that $(a,b)\in \GG\cap\NN_2(\xi)$.  If $b = 0$ then our assumption
	that $(a,b)\in \GG$ forces $a =1$ and clearly $(1,0)\in \NN_1(\xi)$.   We may assume without loss of generality that $a > 0$ and $b > 0$.
	In this situation, our assumption $(a,b)\in \NN_2(\xi)$ implies that
	\begin{equation} \label{N2Assume}
		\frac{a-1}{b} < \xi < \frac{a}{b}.
	\end{equation}
	Now suppose that $(m,n)\in \NN(\xi)$ is such that
	\begin{equation*}
		\xi < \frac{m}{n} < \frac{a}{b}.
	\end{equation*}
	Since we have assumed that $(a,b)\in \NN_2(\xi)$, we must have that $n > b$.  In order to prove that $(a,b)\in \NN_1(\xi)$ we must prove that $m > a$ which we
	shall do by contradiction.  Hence, we assume that $ m \leq a$.
	
	Since $a - 1 \geq 0$ and $n > b$ we obtain that
	\begin{equation*}
		\frac{m}{n} > \xi > \frac{a-1}{b} \geq \frac{a-1}{n},
	\end{equation*}
	and therefore $m > a-1$.  Since $m\in \nat$, we now find that $a = m$.  Consequently,
	\begin{equation*}
		\frac{m}{n} = \frac{a}{n} \leq \frac{a}{b+1} < \frac{a}{b}
	\end{equation*}
	and hence, $(a - 1)/b < a/(b+1)$.  Simplifying this inequality, we are lead to $a < b + 1$ so that $a\leq b$ contradicting our assumption that $\xi > 1$ and \eqref{N2Assume}.
\end{proof}

\subsection{Results connected to Theorem \ref{FirstAxioms}}

A point $x\in \NN(\xi)$ is called {\it reducible with respect to $\xi$} if there exist $y,z \in \NN(\xi)$ such that $x = y+z$.  Similarly, we say that a point $x\in \DD(\xi)$ is 
{\it reducible with respect to $\xi$} if there exist $y,z \in \DD(\xi)$ such that $x = y+z$.  
If $x\in\mathcal N$ is not reducible with respect to $\xi$, then we say that $x$ is {\it irreducible with respect to $\xi$}.  By applying Proposition \ref{NDBasics}, 
we find that $(a,b)$ is irreducible with respect to $\xi$ if and only if $(b,a)$ is irreducible with respect to $\xi^{-1}$.  The first step in our proof of Theorem \ref{FirstAxioms} is the following lemma.

\begin{lem} \label{BestApproxes}
	Suppose that $\xi$ is a positive real irrational number and $(a,b)\in \mathcal N$.
	\begin{enumerate}[(i)]
		\item\label{UpperIrred} Assume $(a,b)\in \NN(\xi)$.  Then $(a,b)$ is irreducible with respect to $\xi$ if and only if $(a,b)\in \GG\cap \NN_2(\xi)$.
		\item\label{LowerIrred} Assume $(a,b)\in \DD(\xi)$.  Then $(a,b)$ is irreducible with respect to $\xi$ if and only if $(a,b)\in\GG\cap \DD_1(\xi)$.
	\end{enumerate}
\end{lem}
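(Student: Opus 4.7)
The plan is to reduce (ii) to (i) using Proposition \ref{NDBasics}: the map $(a,b)\mapsto (b,a)$ sends $\NN(\xi)$ to $\DD(\xi^{-1})$ and $\NN_2(\xi)$ to $\DD_1(\xi^{-1})$, and evidently $(a,b)=(c,d)+(e,f)$ in $\NN(\xi)$ corresponds to $(b,a)=(d,c)+(f,e)$ in $\DD(\xi^{-1})$. So it suffices to treat (i), splitting it into the two implications.

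For the forward direction in \eqref{UpperIrred}, assume $(a,b)\in\NN(\xi)$ is irreducible. First I would rule out $\gcd(a,b)=d>1$: if $b>0$, the decomposition $(a,b)=(a/d,b/d)+((d-1)a/d,(d-1)b/d)$ has both summands in $\NN(\xi)$ since they share the ratio $a/b>\xi$; if $b=0$ the convention gives $\gcd(a,0)=a$, and for $a\geq 2$ one writes $(a,0)=(1,0)+(a-1,0)$, both in $\NN(\xi)$ because $\infty>\xi$. Hence $\gcd(a,b)=1$. Next, suppose $(a,b)\notin\NN_2(\xi)$; then there is $(m,n)\in\NN(\xi)$ with $n\leq b$ and $m/n<a/b$. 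The latter gives $bm<an$, so $m<an/b\leq a$, hence $a-m\geq 1$ and $b-n\geq 0$. If $b>n$, the estimate
\begin{equation*}
a-m \;>\; a - \frac{an}{b} \;=\; \frac{a(b-n)}{b} \;>\; (b-n)\xi
\end{equation*}
shows $(a-m,b-n)\in\NN(\xi)$; if $b=n$ then $(a-m,b-n)=(a-m,0)$, also in $\NN(\xi)$ by convention. Either way, $(a,b)=(m,n)+(a-m,b-n)$ is a decomposition into elements of $\NN(\xi)$, contradicting irreducibility.

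For the converse, suppose $(a,b)\in\GG\cap\NN_2(\xi)$ is reducible, say $(a,b)=(m,n)+(m',n')$ with $(m,n),(m',n')\in\NN(\xi)$. When $n,n'>0$, the mediant identity gives $\min(m/n,m'/n')\leq a/b\leq\max(m/n,m'/n')$, so after relabeling $m/n\leq a/b$. Since $n<b$ (because $n'\geq 1$) and $(m,n)\in\NN(\xi)$, the defining property of $\NN_2(\xi)$ forces $m/n\geq a/b$, hence $m/n=a/b$. Using $\gcd(a,b)=1$, this means $(m,n)=k(a,b)$ for some positive integer $k$, but $n<b$ precludes $k\geq 1$, a contradiction. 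If $n=0$, the mediant argument is replaced by the direct observation that $n'=b$ and $(m',b)\in\NN(\xi)$ force $m'/b\geq a/b$ by the $\NN_2$ property, while $m+m'=a$ with $m\geq 1$ gives $m'\leq a-1<a$, again a contradiction; the case $n'=0$ is symmetric, and $b=0$ forces $(a,b)=(1,0)$, which is plainly irreducible.

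The main obstacle is the careful bookkeeping of the boundary cases where some coordinate vanishes (so that $a/b$ or $m/n$ is $\infty$); here the mediant inequality has to be replaced by a direct comparison, but each case reduces easily to the $\NN_2$ minimality.
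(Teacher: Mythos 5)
Your proposal is correct and follows essentially the same route as the paper's proof: reduce (ii) to (i) via Proposition \ref{NDBasics}, prove the forward direction by exhibiting the gcd-based decomposition and the decomposition $(a,b)=(m,n)+(a-m,b-n)$ when $\NN_2$-minimality fails, and prove the converse by combining the mediant inequality with the $\NN_2$ minimality and $\gcd(a,b)=1$ to rule out any splitting. The only cosmetic difference is in the converse, where you force $m/n=a/b$ for one summand and conclude by divisibility, while the paper first derives $a_1/b_1=a_2/b_2$ and then extracts a common factor of $a$ and $b$; these are the same argument.
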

\begin{proof}
	In view of Proposition \ref{NDBasics}, it is sufficient to prove \eqref{UpperIrred}.  We begin by assuming that $(a,b)$ is irreducible.
	If $(a,b)\not \in \GG$ then we may set $\gcd(a,b) = d > 1$, and it follows that
	\begin{equation} \label{GCDFactor}
		(a,b) = \left(\frac{a}{d},\frac{b}{d}\right) + \left(\frac{a(d-1)}{d},\frac{b(d-1)}{d}\right).
	\end{equation}
	Clearly both summands on the right hand side of \eqref{GCDFactor} belongs to $\NN(\xi)$ so that $(a,b)$ is reducible, a contradiction.
	
	We now assume that $(a,b)\not \in \NN_2(\xi)$, so there exists $(m,n)\in \NN(\xi)$ with $n\leq b$ such that 
	\begin{equation} \label{NewMin}
		\frac{a}{b} > \frac{m}{n}.
	\end{equation}
	If we had that $m\geq a$ then $m/n < a/b \leq a/n \leq m/n$, so we must have that $m < a$.  We now attempt to show that $(a-m,b-n)\in \NN(\xi)$.
	
	We cannot have $n=0$ because this contradicts \eqref{NewMin}.  Additionally, if $n=b$ then $(m,b) \in \NN(\xi)$ and the equality $ (a,b) = (a-m,0) + (m,b)$ contradicts our 
	assumption that $(a,b)$ is irreducible.  Consequently, we may assume that $0 < n < b$.  Since $(m,n)\in \NN(\xi)$, we know that $m/n > \xi$, and since $b-n > 0$, we may
	multiply both sides of this inequality by $b-n$.  Now we obtain that
	\begin{equation*}
		\frac{m}{n}(b-n) > \xi(b-n)
	\end{equation*}
	which leads to
	\begin{equation*}
		\frac{mb}{n} - m > \xi(b-n)
	\end{equation*}
	Now using \eqref{NewMin} and the fact that $b > 0$, we obtain that
	\begin{equation*}
		\frac{ab}{b} - m  > \xi(b-n).
	\end{equation*}
	Hence, we have shown that $(a-m,b-n)\in \NN(\xi)$.  Therefore, $(a,b) = (a-m,b-n) + (m,n)$ satisfies the requirement to be reducible, a contradiction.
	
	Next, we assume that $(a,b)\in \GG\cap \NN_2(\xi)$.  If $b = 0$ then since $\gcd(a,b) = 1$, we know that $ a= 1$.  Clearly $(1,0)$ is irreducible, so we may assume that $b\ne 0$.
	Suppose now that $(a,b)$ is reducible.  Therefore, there exist $(a_1,b_1),(a_2,b_2)\in \NN(\xi)$ such that
	\begin{equation}  \label{FactoringA}
		a = a_1 + a_2\quad \mbox{and}\quad  b = b_1 + b_2.  
	\end{equation}
	We cannot have that $a_1 = 0$ or $a_2 = 0$ because $(0,k)\not \in \NN(\xi)$ for any $k\in\nat_0$.  Moreover, we cannot have $b_1 = 0$ because then $b_2\ne 0$ and 
	\begin{equation*}
		\frac{a}{b}  = \frac{a_1 + a_2}{b_2} > \frac{a_2}{b_2}
	\end{equation*}
	contradicting our assumption that $(a,b)\in \NN_2(\xi)$.  
	Similarly, we cannot have that $b_2=0$.  In view of these remarks, we may now assume that none of $a_1,a_2,b_1,b_2$ are equal to $0$.
	
	We observe that $b_1,b_2 < b$, and since $(a,b)\in \NN_2(\xi)$, we obtain that
	\begin{equation*}
		\frac{a}{b} \leq \frac{a_1}{b_1} \quad\mbox{and}\quad \frac{a}{b} \leq \frac{a_2}{b_2}.
	\end{equation*}
	Applying \eqref{FactoringA}, we deduce that
	\begin{equation*}
		\frac{a_1 + a_2}{b_1 + b_2} \leq \frac{a_1}{b_1}\quad\mbox{and}\quad \frac{a_1 + a_2}{b_1 + b_2} \leq \frac{a_2}{b_2}.
	\end{equation*}
	Simplifying these inequalities, we are lead to 
	\begin{equation*}
		a_2b_1 \leq a_1b_2\quad\mbox{and}\quad a_1b_2 \leq a_2b_1,
	\end{equation*}
	and therefore
	\begin{equation*}
		\frac{a_1}{b_1} = \frac{a_2}{b_2}.
	\end{equation*}
	Now assume that $k,\ell\in \nat$ are such that $\gcd(k,\ell) = 1$ and 
	\begin{equation*}
		\frac{k}{\ell} = \frac{a_1}{b_1} = \frac{a_2}{b_2}.
	\end{equation*}
	Hence, there exist $c_1,c_2\in \nat$ such that $a_1 = c_1k$, $b_1 = c_1\ell$, $a_2 = c_2k$ and $b_2 = c_2\ell$, and it follows from \eqref{FactoringA} that
	\begin{equation*}
		a = k(c_1 + c_2)\quad\mbox{and}\quad b = \ell(c_1 + c_2)
	\end{equation*}
	contradicting our assumption that $\gcd(a,b) = 1$.
\end{proof}

Suppose $\alpha$ is a positive rational different than $1$ and
\begin{equation*} \label{AlphaGeneralFact}
	A = \left(\frac{r_1}{s_1},\frac{r_2}{s_2},\ldots,\frac{r_N}{s_N}\right)\in (\rat^\times)^N
\end{equation*}
is a factorization of $\alpha$.  We say that $A$ is {\it biased} if either
\begin{equation*}
	r_n > s_n\mbox{ for all } 1 \leq n \leq N\qquad\mbox{or}\qquad r_n < s_n\mbox{ for all } 1 \leq n \leq N.
\end{equation*}
In the former case, we say that $A$ is {\it numerator biased}, and in the latter case, we say that $A$ is {\it denominator biased}.  

Now suppose that $\alpha = p^a/q^b$, where $(a,b)\in \mathcal N$ and $p$ and $q$ are distinct primes.
In this case, every factorization of $\alpha$ must have the form
\begin{equation*} 
	A = \left( \frac{p^{a_1}}{q^{b_1}}, \frac{p^{a_2}}{q^{b_2}},\ldots, \frac{p^{a_N}}{q^{b_N}}\right),
\end{equation*}
where $a = a_1 + \cdots + a_N$ and $b = b_1 + \cdots + b_N$.  
The biased factorizations of $p^a/q^b$ are closely connected to the sets $\GG$, $\NN_2(\xi)$ and $\DD_1(\xi)$.

\begin{lem} \label{TotalBias}
	Let $p$ and $q$ be distinct primes, and set $\xi = \log q/\log p$.  If $(a,b)\in \NN(\xi)$ then the following conditions are equivalent.
	\begin{enumerate}[(i)]
		\item\label{NoBias} $p^a/q^b$ has a no non-trivial biased factorization.
		\item\label{Irreducible} $(a,b)$ is irreducible with respect to $\xi$.
		\item\label{Approxes} $(a,b) \in \GG\cap \NN_2(\xi)$.
	\end{enumerate}
	Similarly, if $(a,b)\in \DD(\xi)$ then the following conditions are equivalent.
	\begin{enumerate}[(i)]
		\item\label{NoBias2} $p^a/q^b$ has a no non-trivial biased factorization.
		\item\label{Irreducible2} $(a,b)$ is irreducible with respect to $\xi$.
		\item\label{Approxes2} $(a,b)\in \GG\cap \DD_1(\xi)$.
	\end{enumerate}
\end{lem}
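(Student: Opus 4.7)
The plan is to dispose first of the equivalence \eqref{Irreducible}$\Leftrightarrow$\eqref{Approxes}, which is exactly the content of Lemma~\ref{BestApproxes}\eqref{UpperIrred}, and then focus on \eqref{NoBias}$\Leftrightarrow$\eqref{Irreducible}. The technical bridge will be a dictionary between factorizations of $p^a/q^b$ and $\mathcal{N}$-decompositions of $(a,b)$: every factorization must have the form $(p^{a_1}/q^{b_1},\ldots,p^{a_N}/q^{b_N})$ with $\sum a_n = a$, $\sum b_n = b$, and $(a_n,b_n)\in \mathcal{N}$, because the coprimality condition forces each $r_n$ to be a pure power of $p$ and each $s_n$ a pure power of $q$. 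Under this dictionary, $p^{a_n}/q^{b_n} > 1$ exactly when $(a_n,b_n)\in \NN(\xi)$ and $p^{a_n}/q^{b_n} < 1$ exactly when $(a_n,b_n)\in \DD(\xi)$.

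Granting the dictionary, \eqref{NoBias}$\Leftrightarrow$\eqref{Irreducible} proceeds as follows. The hypothesis $(a,b)\in \NN(\xi)$ gives $p^a/q^b > 1$, which rules out any denominator-biased factorization (the product of numbers each less than $1$ is less than $1$). Hence every non-trivial biased factorization of $\alpha$ is numerator-biased, i.e.\ a decomposition $(a,b) = \sum_{n=1}^{N} (a_n,b_n)$ with $N\geq 2$ and every summand in $\NN(\xi)$; closure of $\NN(\xi)$ under addition collapses such an expression to a length-two decomposition $(a,b) = y + z$ with $y, z \in \NN(\xi)$, which is precisely reducibility. Conversely, if $(a,b) = y + z$ with $y, z \in \NN(\xi)$, then the two corresponding quotients form a non-trivial numerator-biased factorization, provided one verifies the defining clauses of a factorization: coprimality across indices is automatic from $p\neq q$ together with purity of numerators and denominators, and no factor equals $1$ because $\xi\notin\rat$ forces $(a_n,b_n)\neq(0,0)$ to give $p^{a_n}\neq q^{b_n}$.

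For $(a,b)\in \DD(\xi)$ the argument is symmetric. The cleanest route is to pass to $\xi^{-1}$ via Proposition~\ref{NDBasics}: the swap $(a,b)\mapsto (b,a)$ sends $\DD(\xi)$ to $\NN(\xi^{-1})$ and interchanges the roles of $p$ and $q$, so that the claim for $p^a/q^b$ reduces to the already-treated claim for $q^b/p^a$. I do not anticipate any serious obstacle; the whole argument is essentially a translation between the combinatorics of $\mathcal{N}$-decompositions and the biased-factorization language. The one step requiring genuine care is checking that a reducibility witness $(a,b) = y+z$ really does satisfy all three clauses of the definition of a factorization, but this is short once the observation about purity of numerators and denominators is in hand.
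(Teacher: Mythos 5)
Your proposal is correct and follows essentially the same route as the paper: the equivalence of \eqref{Irreducible} and \eqref{Approxes} is delegated to Lemma~\ref{BestApproxes}, the $\DD(\xi)$ case is reduced to the $\NN(\xi)$ case via Proposition~\ref{NDBasics}, and the equivalence of \eqref{NoBias} and \eqref{Irreducible} is exactly the translation between $\NN(\xi)$-decompositions of $(a,b)$ and biased factorizations of $p^a/q^b$. If anything you are slightly more careful than the paper's own write-up, which exhibits its biased factorization with only two factors and does not explicitly note that a denominator-biased factorization is impossible when $p^a/q^b>1$ or that longer numerator-biased factorizations collapse to length two by closure of $\NN(\xi)$ under addition.
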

\begin{proof}
	In view of Proposition \ref{NDBasics} it is sufficient to prove the first statement of the lemma.  Moreover, it follows from Lemma \ref{BestApproxes} that 
	\eqref{Irreducible} $\iff$ \eqref{Approxes}.  Hence, we need only show that \eqref{NoBias} $\iff$ \eqref{Irreducible}.  
	
	If we assume that $(a,b)$ is reducible with respect to $\xi$,
	there exist $(a_1,b_1),(a_2,b_2)\in \NN(\xi)$ such that $(a,b) = (a_1,b_1) + (a_2,b_2)$.  Since $(a_i,b_i) \in \NN(\xi)$, we know that $a_i/b_i > \log q/\log p$ for 
	$i\in \{1,2\}$ which implies that
	\begin{equation*}
		p^{a_1} > q^{b_1}\quad\mbox{and}\quad p^{a_2} > q^{b_2}.
	\end{equation*}
	Hence, $(p^{a_1}/q^{b_1}, p^{a_2}/q^{b_2})$ is a non-trivial biased factorization of $p^a/q^b$.
	
	If we assume that $p^a/q^b$ has a non-trivial biased factorization $(p^{a_1}/q^{b_1}, p^{a_2}/q^{b_2})$ then certainly $a_1 + a_2 = a$ and $b_1+b_2 = a$.
	We also have that
	\begin{equation*}
		p^{a_1} > q^{a_2}\quad\mbox{and}\quad p^{a_2} > q^{b_2}.
	\end{equation*}
	which means that $(a_i,b_i)\in \NN(\xi)$ implying that $(a,b)$ is reducible.
\end{proof}

In order to complete the proof of Theorem \ref{FirstAxioms}, we must establish a connection between non-trivial biased factorizations of $p^a/q^b$ and 
the infimum in $m_t(\alpha)$.

\begin{lem} \label{Biased}
	Suppose that $\alpha$ is a positive rational number, $A = (r_1/s_1,\ldots,r_N/s_N)$ is a factorization of $\alpha$, and $t\in (1,\infty)$.
	If there exists $n$ such that $r_n/s_n$ has a non-trivial biased factorization, then $A$ cannot attain the infimum in $m_t(\alpha)$.
\end{lem}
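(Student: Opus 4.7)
The plan is to argue by contradiction: suppose $A$ attains the infimum in $m_t(\alpha)$ and that for some $n$ the rational $r_n/s_n$ admits a non-trivial biased factorization $B = (u_1/v_1,\ldots,u_M/v_M)$ with $M \geq 2$. I will construct a product representation $A'$ of $\alpha$ satisfying $\sum_k m(A'_k)^t < \sum_k m(A_k)^t$, contradicting optimality.

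The key identity I need is
\[
    m(r_n/s_n) = \sum_{i=1}^M m(u_i/v_i).
\]
Since $A$ is a factorization, $\gcd(r_n,s_n) = 1$ and hence $m(r_n/s_n) = \log \max\{r_n,s_n\}$. The coprimality axioms built into the definition of a factorization of $r_n/s_n$ force $\prod_i u_i = r_n$ and $\prod_i v_i = s_n$. Assume without loss of generality that $B$ is numerator biased, so $u_i > v_i$ for every $i$; then $m(u_i/v_i) = \log u_i$, and summing gives $\sum_i m(u_i/v_i) = \log\prod_i u_i = \log r_n = m(r_n/s_n)$. The denominator-biased case is symmetric after swapping numerators and denominators.

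Next, replacing the $n$-th coordinate of $A$ by the entries of $B$ produces a product representation
\[
    A' = (r_1/s_1,\ldots,r_{n-1}/s_{n-1},u_1/v_1,\ldots,u_M/v_M,r_{n+1}/s_{n+1},\ldots,r_N/s_N)
\]
of $\alpha$, and the change in the $t$-th power sum is precisely $\sum_i m(u_i/v_i)^t - m(r_n/s_n)^t$. Setting $x_i = m(u_i/v_i) > 0$ and $S = \sum_i x_i$, I claim $\sum_i x_i^t < S^t$ whenever $M \geq 2$ and $t > 1$. This is immediate after normalizing: the numbers $y_i = x_i/S$ satisfy $y_i \in (0,1)$ (because $M\geq 2$ and every $x_i$ is strictly positive, the latter thanks to the factorization axiom $u_i/v_i \neq 1$), so $y_i^t < y_i$ for $t > 1$, and summing gives $\sum_i y_i^t < \sum_i y_i = 1$. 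Combined with the identity above this yields $\sum_i m(u_i/v_i)^t < m(r_n/s_n)^t$, whence $\sum_k m(A'_k)^t < \sum_k m(A_k)^t$, the desired contradiction.

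I expect no real obstacle beyond bookkeeping. The conceptual content is entirely captured by two observations: biasedness is exactly what makes the Mahler measure additive across the sub-factorization, because the maxima appearing in $\log\max\{u_i,v_i\}$ all fall on the same side; and the strict inequality between the $\ell^t$ and $\ell^1$ aggregations of a positive vector with at least two entries then converts this additivity into a strict decrease as soon as $t > 1$.
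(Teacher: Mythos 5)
Your argument is correct and takes essentially the same route as the paper's proof: replace the offending coordinate $r_n/s_n$ by its biased sub-factorization, use biasedness together with the coprimality conditions to get the additivity $m(r_n/s_n)=\sum_i m(u_i/v_i)$, and then invoke the strict inequality between the $\ell^t$ and $\ell^1$ sums of a positive vector for $t>1$. The only cosmetic differences are that the paper groups the biased factorization into exactly two factors $w/x$ and $y/z$ (rather than keeping all $M$ pieces) and quotes $(a+b)^t>a^t+b^t$ directly where you prove it by normalization.
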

\begin{proof}
	Suppose that $r_n/s_n$ has a non-trivial biased factorization and assume without loss of generality that $r_n > s_n$.  Therefore, there exist
	$w,x,y,z\in\nat$ such that $w>x$, $y > z$ and
	\begin{equation*}
		\frac{r_n}{s_n} = \frac{w}{x}\cdot \frac{y}{z},
	\end{equation*}
	where $\gcd(w,x) = \gcd(w,z) = \gcd(y,x) = \gcd(y,z) = 1$.  It follows that $r_n = wy$ and $s_n = xz$.  These facts combine to ensure that
	\begin{equation*}
		m\left(\frac{r_n}{s_n}\right) = \log r_n = \log w + \log y = m\left( \frac{w}{x}\right) + m\left( \frac{y}{z}\right) 
	\end{equation*}
	Since $t> 1$ we now obtain that
	\begin{equation*}
		m\left(\frac{r_n}{s_n}\right)^t = \left(m\left( \frac{w}{x}\right) + m\left( \frac{y}{z}\right) \right)^t > m\left( \frac{w}{x}\right)^t + m\left( \frac{y}{z}\right)^t.
	\end{equation*}
	Moreover, we clearly have that
	\begin{equation*}
		\alpha = \left(\prod_{\substack{ m = 1 \\ m \ne n}}^N \frac{r_n}{s_n}\right)\cdot \frac{w}{x}\cdot \frac{y}{z}
	\end{equation*}
	so that $A$ cannot attain the infimum in the definition of $m_t(\alpha)$.
\end{proof}

Theorem \ref{FirstAxioms} now follows easily by applying Lemmas \ref{TotalBias} and \ref{Biased}.

\subsection{Results Connected to Theorem \ref{Main}}

We now wish to complete the proof of Theorem \ref{Main}.  For this purpose, we need three additional definitions and some lemmas.
Assume that $\xi$ is a positive real irrational number.  A point $(a,b)\in \mathcal N$ is called an {\it upper boundary point for $\xi$} if $(a,b)\in \NN(\xi)$ and $(a,b+1)\in\DD(\xi)$.
Similarly, $(a,b)$ is called a {\it lower boundary point for $\xi$} if $(a,b)\in \DD(\xi)$ and $(a+1,b)\in\NN(\xi)$.  In either case, $(a,b)$ will simply be called a {\it boundary point}.

\begin{lem} \label{BoundaryBA}
	Assume that $\xi$ is a positive real irrational number and $(a,b)\in \GG$.  Then the following conditions hold.
	\begin{enumerate}[(i)]
		\item\label{UBP} Assume $(a,b)\in \NN_2(\xi)$.  Then $(a,b)$ is an upper boundary point for $\xi$ if and only if $(a,b)\in \NN_1(\xi)$
		\item\label{LBP} Assume $(a,b)\in \DD_1(\xi)$.  Then $(a,b)$ is a lower boundary point for $\xi$ if and only if $(a,b)\in \DD_2(\xi)$.
	\end{enumerate}
\end{lem}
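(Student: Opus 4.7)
The plan is to prove part \eqref{UBP} directly from the definitions and then deduce part \eqref{LBP} via the symmetry in Proposition \ref{NDBasics}. Specifically, the map $(a,b)\mapsto(b,a)$ swaps $\NN_i(\xi)$ with $\DD_i(\xi^{-1})$ for $i\in\{1,2\}$, and one checks easily that $(a,b)$ is a lower boundary point for $\xi$ if and only if $(b,a)$ is an upper boundary point for $\xi^{-1}$; hence \eqref{LBP} applied to $(a,b)$ and $\xi$ is exactly \eqref{UBP} applied to $(b,a)$ and $\xi^{-1}$.

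To prove \eqref{UBP}, I would handle the two implications separately. For the forward direction, assume $(a,b)$ is an upper boundary point, so $a/(b+1) < \xi$. If $(a,b)$ failed to lie in $\NN_1(\xi)$, there would exist $(m,n)\in\NN(\xi)$ with $m\leq a$ and $m/n < a/b$. Because $(a,b)\in\NN_2(\xi)$, any $(m,n)\in\NN(\xi)$ with $m/n<a/b$ must satisfy $n>b$, so $n\geq b+1$. Combining $m\leq a$ with $n\geq b+1$ yields
\begin{equation*}
\frac{m}{n}\leq \frac{a}{b+1}<\xi,
\end{equation*}
contradicting $(m,n)\in\NN(\xi)$.

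For the reverse direction, assume $(a,b)\in\NN_1(\xi)$ and suppose for contradiction that $(a,b+1)\notin\DD(\xi)$. Since $\xi$ is irrational, this forces $a/(b+1)>\xi$, so $(a,b+1)\in\NN(\xi)$. But then $(m,n)=(a,b+1)$ satisfies $m\leq a$ and $m/n = a/(b+1) < a/b$, contradicting membership of $(a,b)$ in $\NN_1(\xi)$. This contradiction finishes \eqref{UBP}, and the symmetry argument above yields \eqref{LBP}.

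The argument is essentially bookkeeping on the two inequalities $m\leq a$ and $n\leq b$ that appear in the definitions of $\NN_1$ and $\NN_2$; the only subtlety, and arguably the main point to keep an eye on, is extracting the strict bound $n\geq b+1$ from the assumption $(a,b)\in\NN_2(\xi)$ in the forward direction, which is what allows the upper-boundary hypothesis $a/(b+1)<\xi$ to close the argument.
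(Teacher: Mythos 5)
Your proof is correct and follows essentially the same strategy as the paper: prove one half directly from the definitions of $\NN_1,\NN_2$ and the boundary condition, and obtain the other half from the $\xi\mapsto\xi^{-1}$ symmetry of Proposition \ref{NDBasics}. The only difference is cosmetic — the paper proves part \eqref{LBP} directly (with a slightly longer case analysis on $b>n$ versus $b=n$) and deduces \eqref{UBP}, whereas you prove \eqref{UBP} directly via the cleaner chain $m/n\leq a/(b+1)<\xi$ and deduce \eqref{LBP}.
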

\begin{proof}
	By Proposition \ref{NDBasics} it is sufficient to prove either one of the assertions in the lemma.  We shall prove \eqref{LBP}.  
	In this case, assume that $(a,b)\in \DD_1(\xi)$. 
	
	First suppose that $(a,b)$ is a lower boundary point for $\xi$ and that $(m,n)\in \DD(\xi)$ satisfies
	\begin{equation*}
		\frac{a}{b} < \frac{m}{n} < \xi.
	\end{equation*}
	Since $(a,b)\in \DD_1(\xi)$, we know that $a< m$.  In order to establish that $(a,b)\in \DD_2(\xi)$, we must prove that $b < n$ which we shall do by contradiction.
	Hence, assume that $b \geq n$.
	
	If $a = 0$ then since $\gcd(a,b) = 1$ we obtain that $(a,b) = (0,1)$ so that $n = 1$.  This means that
	\begin{equation*}
		\frac{a}{b} < \frac{1}{1} \leq \frac{m}{n} < \xi
	\end{equation*}
	which contradicts our assumption that $(a,b)$ is a lower boundary point for $\xi$.

	Therefore, we now assume that $a > 0$.  If $b > n$ then
	\begin{equation*}
		\frac{a}{b} < \frac{a}{n} < \frac{m}{n} < \xi
	\end{equation*}
	so that $(a,n)\in \DD(\alpha)$ contradicting our assumption that $(a,b)\in\DD_1(\xi)$.  If $b = n$ then
	\begin{equation*}
		\frac{a}{b} < \frac{m}{b} = \frac{m}{n} < \alpha.
	\end{equation*}
	But since $a < a + 1 \leq m$, we obtain 
	\begin{equation*}
		\frac{a}{b} < \frac{a+1}{b} \leq \frac{m}{n} < \alpha
	\end{equation*}
	contradicting our assumption that $(a,b)$ is a lower boundary point for $\xi$.  This proves that $b < n$ so it follows that $(a,b)\in \DD_2(\xi)$.
	
	For the converse, if $(a,b)$ is a not lower boundary point for $\xi$ then
	\begin{equation*}
		\frac{a}{b} < \frac{a+1}{b} < \xi
	\end{equation*}
	contradicting our assumption that $(a,b)\in \DD_2(\xi)$.
\end{proof}

We obtain an important lemma regarding boundary points in optimal factorizations.

\begin{lem} \label{BoundaryPointFactorization}
	Suppose $p$ and $q$ are distinct primes and $\xi = \log q/\log p$.  Assume that $(a,b)$ is a boundary point for $\xi$ and that
	\begin{equation} \label{InfAttFact}
		\left( \frac{p^{a_1}}{q^{b_1}}, \frac{p^{a_2}}{q^{b_2}},\ldots, \frac{p^{a_N}}{q^{b_N}}\right)
	\end{equation}
	attains the infimum in $m_t(p^a/q^b)$ for some $t > 0$.  Then $(a_n,b_n)$ is a boundary point for all $n$.
\end{lem}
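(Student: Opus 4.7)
The plan is a proof by contradiction using a symmetry reduction, followed by two parallel swap-or-arithmetic arguments. By Proposition~\ref{NDBasics}, interchanging $(p,q)$ toggles $\NN(\xi)\leftrightarrow\DD(\xi^{-1})$ and upper-boundary $\leftrightarrow$ lower-boundary, so I may assume $(a,b)$ is an upper boundary point for $\xi$; equivalently $0<a-b\xi<\xi$. Suppose for contradiction that some $(a_k,b_k)$ in the factorization is not a boundary point. For $t>1$, Theorem~\ref{FirstAxioms} forces $(a_k,b_k)\in\GG\cap(\NN_2(\xi)\cup\DD_1(\xi))$, and Lemma~\ref{BoundaryBA} converts non-boundary into either $(a_k,b_k+1)\in\NN(\xi)$ (equivalently $a_k-b_k\xi>\xi$) or $(a_k+1,b_k)\in\DD(\xi)$ (equivalently $a_k-b_k\xi<-1$). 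The regime $t\le 1$ is covered either trivially (only the trivial factorization attains for $t<1$) or by the arithmetic argument below (at $t=1$, attainers of $m_1(\alpha)=a\log p$ must consist entirely of $\NN(\xi)$-type factors).

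First suppose $(a_k,b_k+1)\in\NN(\xi)$. If some other factor $(a_j,b_j)$ lies in $\DD(\xi)$, then $b_j\ge 1$, and I perform the swap that replaces $(a_k,b_k)$ by $(a_k,b_k+1)$ and $(a_j,b_j)$ by $(a_j,b_j-1)$ (dropping the $j$-th factor if it becomes $(0,0)$). The $k$-th Mahler measure is unchanged, still equal to $a_k\log p$, since $(a_k,b_k+1)\in\NN(\xi)$. The $j$-th measure, however, drops from $b_j\log q$ to either $(b_j-1)\log q$ or $a_j\log p$, both strictly less than $b_j\log q$ because $(a_j,b_j)\in\DD(\xi)$. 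Thus $\sum_n m(p^{a_n}/q^{b_n})^t$ strictly decreases, contradicting optimality. If no such $(a_j,b_j)$ exists, then every $(a_n,b_n)\in\NN(\xi)$, each $a_n-b_n\xi$ is positive, and $a-b\xi=\sum_n(a_n-b_n\xi)\ge a_k-b_k\xi>\xi$, contradicting $a-b\xi<\xi$.

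The dual sub-case $(a_k+1,b_k)\in\DD(\xi)$ is treated symmetrically. The swap $(a_k,b_k)\mapsto(a_k+1,b_k)$ combined with $(a_j,b_j)\mapsto(a_j-1,b_j)$ for any $(a_j,b_j)\in\NN(\xi)$ preserves the $k$-th measure $b_k\log q$ while strictly reducing the $j$-th measure (again dropping the factor if it collapses to $(0,0)$). If instead all factors are in $\DD(\xi)$, then each $a_n-b_n\xi<0$ and summing yields $a-b\xi<0$, contradicting $(a,b)\in\NN(\xi)$.

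The main obstacle is orchestrating the swap carefully: verifying that the unit shift preserves the $\NN$-versus-$\DD$ classification of the offending factor (guaranteeing its Mahler measure is unchanged), checking that a partner of the opposite type is available and non-degenerate, and confirming that the partner's measure strictly decreases under both possible reclassifications of its shifted form. Each of these points follows directly from Lemma~\ref{BoundaryBA} and the definitions of $\NN_1,\NN_2,\DD_1,\DD_2$. Once the swap is set up, the dichotomy resolves cleanly: either an opposite-type partner exists and the objective strictly decreases, or no such partner exists and the arithmetic identity $a-b\xi=\sum_n(a_n-b_n\xi)$ immediately violates the tight interval $(0,\xi)$ imposed by the upper boundary hypothesis.
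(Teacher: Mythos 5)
Your proof is correct and takes essentially the same approach as the paper's: a unit swap of $p$ (or $q$) between the non-boundary factor and a partner of the opposite $\NN(\xi)$/$\DD(\xi)$ type, with the partner's existence forced by the boundary hypothesis on $(a,b)$ (you phrase this via $a-b\xi=\sum_n(a_n-b_n\xi)$; the paper phrases it via the modified product landing on the wrong side of $1$). The preliminary appeal to Theorem \ref{FirstAxioms}, Lemma \ref{BoundaryBA}, and the split into $t>1$ versus $t\le 1$ is unnecessary: negating the definition of boundary point directly gives $(a_k,b_k+1)\in\NN(\xi)$ or $(a_k+1,b_k)\in\DD(\xi)$, and the swap strictly decreases $\sum_n m(\cdot)^t$ for every $t>0$, which is exactly the generality the lemma claims.
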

\begin{proof}
	Assume without loss of generality that $(a,b) \in \DD(\xi)$ so that $(a,b)$ is a lower boundary point for $\xi$.  This means that $(a,b)\in \DD(\xi)$ and $(a+1,b)\in \NN(\xi)$.
	Now assume that there exists $n$ such that $(a_n,b_n)$ is not a boundary point and we distinguish two cases.
	
	We first assume that $(a_n,b_n)\in \DD(\xi)$.  Since $(a_n,b_n)$ is not a lower boundary point, we know that $(a_n+1,b_n)\in \DD(\xi)$.  
	As a result we have that
	\begin{equation} \label{SameMeasures}
		m\left( \frac{p^{a_n}}{q^{b_n}}\right)  = m\left( \frac{p^{a_n+1}}{q^{b_n}}\right).
	\end{equation}
	If $(a_m,b_m)\in \DD(\xi)$ for all $1 \leq m\leq N$ then we would have that
	\begin{equation*}
		\frac{p^{a+1}}{q^b} = \left(\prod_{\substack{ m=1 \\ m\ne n}}^N \frac{p^{a_m}}{q^{b_m}} \right)\cdot \frac{p^{a_n + 1}}{q^{b_n}} < 1
	\end{equation*}
	contradicting our assumption that $(a+1,b)\in \NN(\xi)$.  Therefore, we assume that there exists $k$ such that $p^{a_k}/q^{b_k} > 1$ so that
	\begin{equation} \label{SmallerMeasures}
		m\left( \frac{p^{a_k}}{q^{b_k}}\right)  > m\left( \frac{p^{a_k-1}}{q^{b_k}}\right).
	\end{equation}
	However, we clearly have that
	\begin{equation*}
		\frac{p^a}{q^b} = \left(\prod_{\substack{ m=1 \\ m\not\in \{n,k\}}}^N \frac{p^{a_m}}{q^{b_m}} \right)\cdot \frac{p^{a_n + 1}}{q^{b_n}} \cdot \frac{p^{a_k-1}}{q^{b_k}}.
	\end{equation*}
	Combining this with \eqref{SameMeasures} and \eqref{SmallerMeasures}, we contradict our assumption that \eqref{InfAttFact} attains the infimum in $m_t(p^a/q^b)$ for some $t>0$.
	
	Next, we assume that $(a_n,b_n)\in \NN(\xi)$ so that $(a_n,b_n)$ is not an upper boundary point.  This means that $(a_n,b_n+1)\in \NN(\xi)$ and
	\begin{equation*}
		m\left( \frac{p^{a_n}}{q^{b_n}}\right)  = m\left( \frac{p^{a_n}}{q^{b_n+1}}\right)
	\end{equation*}
	We cannot have that $p^{a_m}/q^{b_m} > 1$ for all $m$ because then
	\begin{equation*}
		\frac{p^a}{q^b} = \prod_{m=1}^N \frac{p^{a_m}}{q^{b_m}} > 1
	\end{equation*}
	contradicting our assumption that $(a,b)\in \DD(\xi)$.  Therefore, there exists $k$ such tat $p^{a_k}/q^{b_k} < 1$ and it follows that
	\begin{equation*}
		m\left( \frac{p^{a_k}}{q^{b_k}}\right)  > m\left( \frac{p^{a_k}}{q^{b_k-1}}\right) 
	\end{equation*}
	once again contradicting our assumption that \eqref{InfAttFact} attains the infimum in $m_t(p^a/q^b)$ for some $t>0$.
\end{proof}

We may now quickly record the proof of Theorem \ref{Main} as a consequence of Theorem \ref{FirstAxioms} and Lemma \ref{BoundaryBA}.

\begin{proof}[Proof of Theorem \ref{Main}]
	 Assume without loss of generality that $(a,b)$ is an upper best approximate for $\xi$ and set $\alpha = p^a/q^b$.  Lemma \ref{BoundaryBA} implies that
	 $(a,b)$ must be an upper boundary point for $\xi$.  Further suppose that 
	\begin{equation*} 
		\left( \frac{p^{a_1}}{q^{b_1}}, \frac{p^{a_2}}{q^{b_2}},\ldots, \frac{p^{a_N}}{q^{b_N}}\right)
	\end{equation*}
	is a factorization attaining the infimum in $m_t(\alpha)$ for some $t > 1$.  If $(a_n,b_n)\in \NN(\xi)$ then Theorem \ref{FirstAxioms} implies $(a_n,b_n)\in \GG\cap \NN_2(\xi)$.
	By Lemma \ref{BoundaryPointFactorization}, $(a_n,b_n)$ is an upper boundary point, and then by Lemma \ref{BoundaryBA}
	$(a_n,b_n)\in \GG\cap\NN_1(\xi)$.  If $(a_n,b_n)\in \DD(\xi)$ we similarly obtain that $(a_n,b_n)\in\GG\cap \DD_2(\xi)$ as required.
\end{proof}

\subsection{Results connected to applications of our main results}

Our ability to rewrite factorizations of $p^a/q^b$ as vectors (Theorem \ref{VectorConversion}) is a critical component of our applications discussed above.  We begin this subsection by
providing its proof.

\begin{proof}[Proof of Theorem \ref{VectorConversion}]
	Suppose that $\{(a_n,b_n)\}_{n=1}^\infty$ are the upper and lower best approximations of $\log q/\log p$ with $b_1 \leq b_2 \leq \ldots$.  
	Also, we may assume that $a_1\leq a_2 \leq \ldots$.  Since $\alpha$ satisfies the hypotheses of Theorem \ref{Main}, there exists $N\in \nat$ such that
	$\alpha = p^{a_N}/q^{b_N}$ and hence, the characteristic transformation of $\alpha$ is given by
	\begin{equation*}
		T_\alpha = \left( \begin{array}{cccc} a_1 & a_2 & \cdots & a_N \\ b_1 & b_2 & \cdots & b_N \end{array} \right).
	\end{equation*}
	
	Now assume that $A\in \mathcal B(\alpha)$.  Since $A$ is factorization of $\alpha$, all entries of $\alpha$ must have the form $p^{a_n}/q^{b_n}$ for $1\leq n \leq N$, and therefore,
	$A$ is equivalent to the factorization 
	\begin{equation*}
		\left( \underbrace{ \frac{p^{a_1}}{q^{b_1}},\cdots,\frac{p^{a_1}}{q^{b_1}}}_{x_1\mbox{ times}},  
				 \underbrace{\frac{p^{a_2}}{q^{b_2}},\cdots,\frac{p^{a_2}}{q^{b_2}}}_{x_2\mbox{ times}}, \cdots\cdots,
				  \underbrace{\frac{p^{a_N}}{q^{b_N}},\cdots,\frac{p^{a_N}}{q^{b_N}}}_{x_N\mbox{ times}} \right)
	\end{equation*}
	for some $x_1,x_2,\ldots,x_N\in \nat_0$.  Moreover, we note that
	\begin{equation*}
		\sum_{n=1}^N x_na_n = a_N \quad\mbox{and}\quad \sum_{n=1}^N x_nb_n = a_N
	\end{equation*}
	so that $(x_1,x_2,\ldots,x_N)^T\in \mathcal V(\alpha)$ and it follows that $\phi$ is surjective.  Finally, the injectivity of $\phi$ is easily verified completing the proof.
\end{proof}

Before proceeding with the proof of Theorem \ref{GRApprox} we note an additional fact regarding the continued fraction expansion of an irrational number $\xi>1$. 
For simplicity, if $(a,b)$ is an upper (or lower) best approximation to $\xi$, we shall say that $a/b$ is an upper (or lower) best approximation to $\xi$.  If
$\xi = [x_0;x_1,x_2,x_3,\ldots]$ then we have already noted in Theorem \ref{ApproxFrac} that $a/b\in \rat$ is an upper or lower best approximation 
to $\xi$ if and only if there exists $n\in \nat$ and $1\leq x\leq x_n$ such that
\begin{equation*}
	\frac{a}{b} = [x_0;x_1,x_2,\ldots,x_{n-1},x].
\end{equation*}
If $n$ is even, then we observe two additional useful facts:
\begin{enumerate}[(i)]
	\item  $[x_0;x_1,x_2,\ldots,x_{n-1},x]$ is strictly increasing as a function of $x$.
	\item  $[x_0;x_1,x_2,\ldots,x_{n-1},x_n]\in \mathcal L(\xi)$
\end{enumerate}
Still under the assumption that $n$ is even, we conclude that $[x_0;x_1,x_2,\ldots,x_{n-1},x]$ is a lower best approximation to $\xi$ for all $1\leq x\leq x_n$.
Analogously, if $n$ is odd then $[x_0;x_1,x_2,\ldots,x_{n-1},x]$ is an upper best approximation to $\xi$ for all $1\leq x\leq x_n$.
We are now prepared to provide our proof of Theorem \ref{GRApprox}.

\begin{proof}[Proof of Theorem \ref{GRApprox}]
	Let $x = h_n/h_{n-1}$ and $x + \varepsilon = h_{n+1}/h_n$ so that $\varepsilon >0$.  Now select $p$ so that $p^\varepsilon \geq 2$.  It is a well-known result of Chebyshev that there 
	exists a prime $q$ such that $p^x \leq q \leq 2p^x \leq p^{x+\varepsilon}$, and it follows that $x < \log q/\log p < x+ \varepsilon$ proving \eqref{GoldenSqueeze}.\footnote{
	We thank Paul Fili for providing the proof of Theorem \ref{GRApprox}\eqref{GoldenSqueeze}.}
	
	Now let $\xi = \log q / \log p$ and let $\phi = (1+\sqrt 5)/2$.  We first claim that $h_{n}/h_{n-1}$ is a lower best approximation to $\xi$.  To see this, we assume that 
	$a/b\in \rat$ is such that
	\begin{equation*}
		\frac{h_n}{h_{n-1}} < \frac{a}{b} < \xi\quad\mbox{and}\quad b > h_{n-1}.
	\end{equation*}
	Moreover, from \eqref{GoldenSqueeze} we deduce that either
	\begin{equation} \label{Ordering}
		\frac{h_n}{h_{n-1}} < \xi < \phi < \frac{h_{n+1}}{h_n}\quad\mbox{or}\quad \frac{h_n}{h_{n-1}} < \phi < \xi < \frac{h_{n+1}}{h_n}.
	\end{equation}
	In both cases we conclude that
	\begin{equation*}
		\left| \frac{h_n}{h_{n-1}} - \phi\right| > \left| \frac{a}{b} - \phi\right|,
	\end{equation*}
	contradicting the known fact that $h_n/h_{n-1}$ is a best approximation to $\phi$.
	
	If $n=2$ then $h_{n}/h_{n-1} = 1$ and $h_{n+1}/h_n = 3/2$.  In this situation,  the continued fraction expansion for $\xi$ must have $1$ as its initial entry and the desired result
	follows from Theorem \ref{ApproxFrac}.  In every other case, there are precisely two continued fraction expansions for $h_n/h_{n-1}$ given by
	\begin{equation*}
		\frac{h_n}{h_{n-1}} = [\underbrace{1;1,1,\ldots,1,1}_{n-1\ \mbox{times}}] = [\underbrace{1;1,1,\ldots,1,1}_{n-3\ \mbox{times}},2].
	\end{equation*}
	Since $h_n/h_{n-1}$ is a lower best approximation to $\xi$, Theorem \ref{ApproxFrac} implies that the continued fraction expansion for $\xi$
	has the form
	\begin{equation*}
		\xi =  [\underbrace{1;1,1,\ldots,1,1}_{n-2\ \mbox{times}},a_{n-2},a_{n-1},a_n\ldots]\quad\mbox{or}\quad
		\xi = [\underbrace{1;1,1,\ldots,1,1}_{n-3\ \mbox{times}},a_{n-3},a_{n-2},a_{n-1},\ldots],
	\end{equation*}
	where in the latter case $a_{n-3} \geq 2$.  In the latter case, we also deduce that
	\begin{equation*}
		[\underbrace{1;1,1,\ldots,1,1}_{n-3\ \mbox{times}},2] = \frac{h_n}{h_{n-1}} \quad\mbox{and}\quad 
		[\underbrace{1;1,1,\ldots,1,1}_{n-2\ \mbox{times}}] = \frac{h_{n-1}}{h_{n-2}}
	\end{equation*}
	both belong to $\mathcal L(\xi)$ which contradicts the fact that $h_n/h_{n-1} < \phi < h_{n+1}/h_n < h_{n-1}/h_{n-2}$.  This forces
	\begin{equation} \label{XiFrac}
		\xi =  [\underbrace{1;1,1,\ldots,1,1}_{n-2\ \mbox{times}},a_{n-2},a_{n-1},a_n\ldots]
	\end{equation}
	and the result follows again by applying Theorem \ref{ApproxFrac}.
\end{proof}

Although it isn't necessary for the proof of Theorem \ref{GRApprox}\eqref{CharTran}, we note that the continued fraction expansion for $\xi$ given in \eqref{XiFrac} must have $a_{n-1} = 1$.
That is, we know that
\begin{equation*}
	\xi =  [\underbrace{1;1,1,\ldots,1,1}_{n-1\ \mbox{times}},a_{n-1},a_{n},\ldots].
\end{equation*}
Otherwise, $h_n/h_{n-1}$ and $h_{n+1}/h_n$ must both belong to $\mathcal L(\xi)$ which contradicts our assumption from Theorem \ref{GRApprox}\eqref{GoldenSqueeze}.

We now establish the proof of our estimate on the size of $\mathcal V(\alpha)$ when $\alpha$ satisfies Theorem \ref{GRApprox}.

\begin{proof}[Proof of Theorem \ref{GoldenBound}]
	By Theorem \ref{GRApprox}, we conclude that $\alpha_n$ has characteristic transformation
	\begin{equation*}
		T_\alpha = \left( \begin{array}{cccc} h_1 & h_2 & \cdots & h_{n+1} \\ h_0 & h_1 & \cdots & h_{n} \end{array} \right).
	\end{equation*}
	Assuming $\xx = (x_1,x_2,\ldots,x_n,x_{n+1})^T\in \mathcal V(\alpha_n)$ we know that $x_i \leq h_{n+1}$ for all $i$.  Otherwise, we would have
	\begin{equation*}
		x_1h_1 + x_2h_2 + \cdots x_nh_n + x_{n+1}h_{n+1} \geq x_ih_{i} > h_{n+1},
	\end{equation*}
	a contradiction.  Hence, it follows that $\#\mathcal V(\alpha) \leq (h_{n+1}+1)^{n+1}$.  In addition, using basic facts about the Fibonacci sequence, we find that 
	$ h_{n+1} \geq 2^{n/2}$ so that
	\begin{equation*}
		n \leq \frac{2\log h_{n+1}}{\log 2}.
	\end{equation*}
	These observations yield
	\begin{equation*}
		\log(\#\mathcal V(\alpha_n)) = (n+1)\log (h_{n+1} + 1) \leq \left(\frac{2\log h_{n+1}}{\log 2} + 1\right)\log (h_{n+1} + 1),
	\end{equation*}
	and we obtain $\log(\#\mathcal V(\alpha_n)) \ll (\log h_{n+1})^2$ as required.
\end{proof}

Although it is somewhat less important for our applications, the use of convex hulls in Theorem \ref{ConvexHulls} can help us to provide a small infimum set for $p^a/q^b$.
We record the proof of this result here.

\begin{proof}[Proof of Theorem \ref{ConvexHulls}]
	It is well-known that $\convex(S) = \convex(\vertex(S))$ so that
	\begin{equation*}
		\convex(S) = \left\{ \sum_{\xx\in \vertex(S)} c_\xx\cdot \xx: c_\xx\geq 0\mbox{ and } \sum_{\xx\in \vertex(S)} c_\xx = 1\right\}.
	\end{equation*}
	If $\yy\in S\setminus \vertex(S)$ we may assume that $\xx_1,\xx_2,\ldots,\xx_k\in \vertex(S)$ and $c_1,c_2,\ldots,c_k\in (0,1)$ are such that
	\begin{equation} \label{Hull}
		\yy = c_1\xx_1 + c_2\xx_2 + \cdots + c_k\xx_k\quad\mbox{and}\quad c_1 + c_2 + \cdots + c_k = 1.
	\end{equation}
	For each positive real number $t$, we may select $\zz_t \in \{\xx_1,\xx_2,\ldots,\xx_k\}$ so that 
	\begin{equation} \label{MinVectors}
		f_{\zz_t}(t) = \min\{f_{\xx_1}(t),f_{\xx_2}(t),\ldots,f_{\xx_k}(t)\}.
	\end{equation}
	To complete the proof of the theorem, it is sufficient to show that $f_\yy(t) \geq f_{\zz_t}(t)$ for all $t\in (0,\infty)$.  
	
	To see this, we write $\yy = (y_1,y_2,\ldots,y_N)$ and $\xx_i = (x_{i1},x_{i2},\ldots,x_{iN})$ for all $1\leq i\leq k$ and assume that $\alpha$ has characteristic transformation
	\begin{equation*}
		T_\alpha = \left( \begin{array}{cccc} a_1 & a_2 & \cdots & a_N \\ b_1 & b_2 & \cdots & b_N \end{array} \right).
	\end{equation*}
	Now observe that
	\begin{equation*}
		f_\yy(t)^t = \sum_{n=1}^N y_n m\left( \frac{p^{a_n}}{q^{b_n}}\right)^t = \sum_{n=1}^N\sum_{i=1}^k c_i\xx_{in}m\left(\frac{p^{a_n}}{q^{b_n}}\right)^t.
	\end{equation*}
	Reversing the order of summation on the right hand side yields
	\begin{equation*}
		f_\yy(t)^t = \sum_{i=1}^k c_i \sum_{n=1}^N \xx_{in}m\left(\frac{p^{a_n}}{q^{b_n}}\right)^t = \sum_{i=1}^k c_i f_{\xx_i}(t)^t.
	\end{equation*}
	Now applying \eqref{Hull} and \eqref{MinVectors}, we obtain
	\begin{equation*}
		f_\yy(t)^t  \geq \sum_{i=1}^k c_i f_{\zz_t}(t)^t = f_{\zz_t}(t)^t
	\end{equation*}
	completing the proof.
\end{proof}


\begin{thebibliography}{}
\bibitem{BDM} P. Borwein, E. Dobrowolski and M.J. Mossinghoff, {\it Lehmer's problem for polynomials with odd coefficients}, Ann. of Math. (2) {\bf 166} (2007),  no. 2, 347--366.
\bibitem{Dobrowolski} E. Dobrowolski, {\it  On a question of Lehmer and the number of irreducible factors of a polynomial}, Acta Arith.  {\bf 34} (1979),  no. 4, 391--401.
\bibitem{DubSmyth} A. Dubickas and C.J. Smyth,  {\it On metric heights}, Period. Math. Hungar. {\bf 46} (2) (2003), 135--155.
\bibitem{DubSmyth2}  A. Dubickas and C.J. Smyth, {\it On the metric Mahler measure}, J. Number Theory {\bf 86} (2001), 368--387.
\bibitem{HardyRamanujan} G.H. Hardy and S. Ramanujan, {\it Asymptotic Formulae in Combinatory Analysis}, Proc. London Math. Soc. {\bf 17} (1918), 75--115.
\bibitem{JankSamuels} J. Jankauskas and C.L. Samuels, {\it The $t$-metric Mahler measures of surds and rational numbers}, Acta Math. Hungar. {\bf 134} (2012), no. 4, 481--498.
\bibitem{Kimberling} C. Kimberling, {\it Best lower and upper approximates to irrational numbers}, Elemente der Mathematik {\bf 52} (1997) 122--126.
\bibitem{Lehmer} D.H. Lehmer, {\it Factorization of certain cyclotomic functions}, Ann. of Math. {\bf 34} (1933), 461--479.
\bibitem{MossWeb} M.J. Mossinghoff, website, {\it Lehmer's Problem}, {\tt http://www.cecm.sfu.ca/~mjm/Lehmer}.
\bibitem{Perron} O. Perron, {\it Die Lehre von den Kettenbr\"uchen}, Chelsea, New York, 1950.
\bibitem{SamuelsCollection} C.L. Samuels, {\it A collection of metric Mahler measures}, J. Ramanujan Math. Soc. {\bf 25} (2010), no. 4, 433--456.
\bibitem{SamuelsParametrized} C.L. Samuels, {\it The parametrized family of metric Mahler measures}, J. Number Theory {\bf 131} (2011), no. 6, 1070--1088
\bibitem{SamuelsMetric} C.L. Samuels, {\it Metric heights on an Abelian group}, Rocky Mountain J. Math. {\bf 44} (2014), no. 6, 2075--2091.
\bibitem{SamuelsStrunk} C.L. Samuels and T.J. Strunk, {\it Optimal factorizations of rational numbers using factorizations trees},  Int. J. Number Theory {\bf 11} (2015), no. 3, 739--769.
\bibitem{Schinzel} A. Schinzel, {\it On the product of the conjugates outside the unit circle of an algebraic number},
 Acta Arith. {\bf 24} (1973), 385--399. Addendum, ibid.  {\bf 26} (1975), no. 3, 329--331.
\bibitem{Smyth} C.J. Smyth, {\it  On the product of the conjugates outside the unit circle of an algebraic integer}, Bull. London Math. Soc.  {\bf 3}  (1971), 169--175.
\bibitem{Voutier} P. Voutier, {\it An effective lower bound for the height of algebraic numbers}, Acta Arith. {\bf 74} (1996), 81--95.


\end{thebibliography}
\end{document}